\newcommand{\vct}[1]{\bm{\mathsf{#1}}}
\newcommand{\mtx}[1]{\bm{\mathsf{#1}}}
\newtheorem{thm}{Theorem}[section]
\newtheorem{prop}[thm]{Proposition}
\newtheorem{remark}{Remark}
\theoremstyle{definition}
\newcommand{\pgnotate}[1]{}
\newcommand{\Cmm}{C_{\rm mm}}
\newcommand{\Cqr}{C_{\rm qr}}
\begin{document}

\begin{center}

\textsc{\large{A randomized blocked algorithm for efficiently computing rank-revealing factorizations of matrices}}

\vspace{3mm}

\textit{
Per-Gunnar Martinsson and Sergey Voronin \\
Department of Applied Mathematics, University of Colorado at Boulder}

\vspace{3mm}

\begin{minipage}{130mm} 
\textbf{Abstract:}
This manuscript describes a technique for computing
partial rank-revealing factorizations, such as, e.g, a partial
QR factorization or a partial singular value decomposition. The method
takes as input a tolerance $\varepsilon$ and an $m\times n$ matrix
$\mtx{A}$, and returns an approximate low rank factorization of $\mtx{A}$ that is
accurate to within precision $\varepsilon$ in the Frobenius norm (or some other
easily computed norm).
The rank $k$ of the computed factorization (which is an output of the algorithm)
is in all examples we examined very close to the theoretically optimal $\varepsilon$-rank.
The proposed method is inspired by the Gram-Schmidt algorithm, and has the same
$O(mnk)$ asymptotic flop count. However, the method relies on randomized
sampling to avoid column pivoting, which allows it to be blocked, and hence
accelerates practical computations by reducing communication.
Numerical experiments demonstrate that the accuracy of the scheme is
for every matrix that was tried at least as good as column-pivoted QR, and is sometimes
much better. Computational speed is also improved substantially, in particular
on GPU architectures.
\end{minipage}
\end{center}

\section{Introduction}

\subsection{Problem formulation} This manuscript describes an algorithm based on
randomized sampling for computing an approximate low-rank factorization of a
given matrix. To be precise, given a real or complex matrix $\mtx{A}$ of size $m\times n$,
and a computational tolerance $\varepsilon$, we seek to determine a matrix $\mtx{A}_{\rm approx}$
of low rank such that
\begin{equation}
\|\mtx{A} - \mtx{A}_{\rm approx}\| \leq \varepsilon.
\end{equation}
For any given $k \in \{1,\,2,\,\dots,\,\min(m,n)\}$, a rank-$k$ approximation to $\mtx{A}$
that is in many ways optimal
is given by the partial singular value decomposition (SVD),
\begin{equation}
\label{eq:svdk}
\begin{array}{ccccccccccccccccccccc}
\mtx{A}_{k} &=& \mtx{U}_{k} & \mtx{\Sigma}_{k} & \mtx{V}_{k}^{*},\\
m\times n && m\times k & k \times k & k\times n
\end{array}
\end{equation}
where $\mtx{U}_{k}$ and $\mtx{V}_{k}$ are orthonormal matrices whose
columns consist of the first $k$ left and right singular vectors, respectively,
and $\mtx{\Sigma}_{k}$ is a diagonal matrix whose diagonal entries
$\{\sigma_{j}\}_{j=1}^{k}$ are the leading $k$ singular values of $\mtx{A}$,
ordered so that $\sigma_{1} \geq \sigma_{2} \geq \sigma_{3} \geq \cdots \geq \sigma_{k} \geq 0$.
The Eckart-Young theorem \cite{1936_eckart_young} states that for the spectral norm and the Frobenius
norm, the residual error is minimal,
$$
\|\mtx{A} - \mtx{A}_{k}\| = \inf\{\|\mtx{A} - \mtx{C}\|\,\colon\,\mtx{C}\ \mbox{has rank}\ k\}.
$$
However, computing the factors in (\ref{eq:svdk}) is computationally expensive.
In contrast, our objective is to find an approximant $\mtx{A}_{\rm approx}$ that is cheap
to compute, and \textit{close} to optimal.

The method we present is designed for situations where $\mtx{A}$ is sufficiently large
that computing the full SVD is not economical. The method is designed to be highly
communication efficient, and to execute efficiently on both shared and distributed
memory machines. It has been tested numerically for situations where the matrix
fits in RAM on a single machine. We will, without loss of generality, assume that
$m \geq n$. For the most part we discuss real matrices, but the generalization to
complex matrices is straight-forward.

\subsection{A greedy template}
\label{sec:greedy}
A standard approach in computing low-rank factorizations is to employ a greedy
algorithm to build, one vector at a time, an orthonormal basis $\{\vct{q}_{j}\}_{j=1}^{k}$
that approximately spans the columns of $\mtx{A}$. To be precise, given an $m\times n$
matrix $\mtx{A}$ and a computational tolerance $\varepsilon$, our objective is to
determine a rank $k$, and an $m\times k$ matrix $\mtx{Q}_{k} = [\vct{q}_{1} \cdots \vct{q}_{k}]$ with
orthonormal column vectors such that $\|\mtx{A} - \mtx{Q}_k \mtx{B}_{k}\| \leq \varepsilon$,
where $\mtx{B}_{k} = \mtx{Q}^*_k \mtx{A}$. The matrices $\mtx{Q}_k$ and $\mtx{B}_k$
may be constructed jointly via the following procedure:

\begin{center}
\sffamily
\fbox{
\begin{minipage}{70mm}
\begin{tabbing}
\hspace{10mm} \= \hspace{5mm} \= \hspace{5mm} \= \hspace{5mm}\kill
Algorithm 1  \\[2mm]
(1) \> $\mtx{Q}_{0} = [\ ]$; $\mtx{B}_{0} = [\ ]$; $\mtx{A}_{0} = \mtx{A}$; $j=0$; \\[1mm]
(2) \> \textbf{while} $\|\mtx{A}^{(j)}\| > \varepsilon$\\[1mm]
(3) \> \> $j = j+1$\\
(4) \> \> Pick a unit vector $\vct{q}_{j} \in \mbox{ran}(\mtx{A}^{(j-1)})$.\\[1mm]
(5) \> \> $\vct{b}_{j}   = \vct{q}_{j}^{*}\mtx{A}^{(j-1)}$\\[1mm]
(6) \> \> $\mtx{Q}_{j}   = [\mtx{Q}_{j-1}\ \vct{q}_{j}]$\\[1mm]
(7) \> \> $\mtx{B}_{j}   = \left[\begin{array}{c} \mtx{B}_{j-1} \\ \vct{b}_{j} \end{array}\right]$\\[1mm]
(8) \> \> $\mtx{A}^{(j)} = \mtx{A}^{(j-1)} - \vct{q}_{j}\vct{b}_{j}$\\[1mm]
(9) \> \textbf{end while}\\[1mm]
(10)\> $k=j$.
\end{tabbing}
\end{minipage}}
\end{center}

Note that $\mtx{A}^{(j)}$ can overwrite $\mtx{A}^{(j-1)}$.
It can be verified
that if the algorithm is executed in exact
arithmetic, then the matrices generated satisfy
\begin{equation}
\label{eq:QB_alg_relations1}
\mtx{A}^{(j)} = \mtx{A} - \mtx{Q}_{j}\mtx{Q}_{j}^{*}\mtx{A},
\qquad\mbox{and}\qquad
\mtx{B}_{j} = \mtx{Q}_{j}^{*}\mtx{A}.
\end{equation}
The performance of the greedy scheme is determined by how we choose
the vector $\vct{q}_{j}$ on line (4). If we pick $\vct{q}_{j}$ as
simply the largest column of $\mtx{A}^{(j-1)}$, scaled to yield a
vector of unit length, then we recognize the scheme as the column
pivoted Gram-Schmidt algorithm for computing a QR factorization.
This method often works very well, but can lead to
sub-optimal factorizations. Reference \cite{gu1996} discusses
this in detail, and also provides an improved pivoting technique
that can be proved to yield closer to optimal results. However,
both standard Gram-Schmidt (see, e.g., \cite[Sect. 5.2]{MR3024913}),
and the improved version in \cite{gu1996} are challenging to implement
efficiently on modern multicore processors since they cannot readily
be blocked. Expressed differently, they rely on BLAS2 operations rather
than BLAS3.

Another natural choice for $\vct{q}_{j}$ on line (4) is to pick
the unit vector that minimizes $\|\mtx{A}^{(j-1)} - \vct{q}\vct{q}^{*}\mtx{A}^{(j-1)}\|$.
This in fact leads to an optimal factorization, with the vectors $\{\vct{q}_{j}\}_{j=1}^{k}$
being left singular vectors of $\mtx{A}$. However, finding the
minimizer tends to be computationally expensive.

In this manuscript, we propose a scheme that is more computationally efficient than
column-pivoted Gram-Schmidt, and often yields close to minimal approximation errors.
The idea is to choose $\vct{q}_{j}$ as
a random linear combination of the columns of $\mtx{A}^{(j-1)}$.
To be precise, we propose the following mechanism for choosing $\vct{q}_{j}$:
\begin{center}
\sffamily
\fbox{
\begin{minipage}{70mm}
\begin{tabbing}
\hspace{10mm} \= \hspace{5mm} \= \hspace{5mm} \= \hspace{5mm}\kill
(4a) \> Draw a random vector $\vct{\omega}$ whose entries are iid Gaussian random variables.\\[1mm]
(4b) \> Set $\vct{y} = \mtx{A}^{(j-1)}\vct{\omega}$.\\[1mm]
(4c) \> Normalize so that $\vct{q}_{j} = \frac{1}{\|\vct{y}\|}\,\vct{y}$.
\end{tabbing}
\end{minipage}}
\end{center}
This scheme is mathematically very close to the low-rank approximation scheme
proposed in \cite{2011_martinsson_randomsurvey}, but is slightly different
in the stopping criterion used (the scheme of \cite{2011_martinsson_randomsurvey}
does not explicitly update the matrix, and therefore relies on a probabilistic
stopping criterion), and in its performance when executed with finite precision
arithmetic.
We argue that choosing the vector $\vct{q}_{j}$ using randomized sampling
leads to performance very comparable to traditional column pivoting, but
has a decisive advantage in that the resulting algorithm is easy to block.
We will demonstrate substantial practical speed-up on both multicore CPUs
and GPUs.

\begin{remark}
The factorization scheme described in this section
produces an approximate factorization of the form $\mtx{A} \approx \mtx{Q}_{k}\mtx{B}_{k}$,
where $\mtx{Q}_{k}$ is orthonormal, but no conditions are \`a priori imposed
on $\mtx{B}_{k}$. Once the factors $\mtx{Q}_{k}$ and $\mtx{B}_{k}$ are available, it is
simple to compute many standard factorizations such as the low rank QR, SVD, or CUR factorizations.
For details, see Section \ref{sec:standard_factorizations}.
\end{remark}

\section{Technical preliminaries}

\subsection{Notation}
\label{sec:notation}
Throughout the paper, we measure vectors in $\mathbb{R}^{n}$ using their
Euclidean norm. The default norm for matrices will be the Frobenius norm
$\|\mtx{A}\| = \left(\sum_{i,j}|\mtx{A}(i,j)|^{2}\right)^{1/2}$, although
other norms will also be discussed.

We use the notation of Golub and Van Loan \cite{MR3024913} to specify
submatrices. In other words, if $\mtx{B}$ is an $m\times n$ matrix with
entries $b_{ij}$, and $I = [i_{1},\,i_{2},\,\dots,\,i_{k}]$ and $J =
[j_{1},\,j_{2},\,\dots,\,j_{\ell}]$ are two index vectors, then we
let $\mtx{B}(I,J)$ denote the $k\times \ell$ matrix
$$
\mtx{B}(I,J) = \left[\begin{array}{cccc}
b_{i_{1}j_{1}} & b_{i_{1}j_{2}} & \cdots & b_{i_{1}j_{\ell}} \\
b_{i_{2}j_{1}} & b_{i_{2}j_{2}} & \cdots & b_{i_{2}j_{\ell}} \\
\vdots         & \vdots         &        & \vdots         \\
b_{i_{k}j_{1}} & b_{i_{k}j_{2}} & \cdots & b_{i_{k}j_{\ell}}
\end{array}\right].
$$
We let $\mtx{B}(I,:)$ denote the matrix $\mtx{B}(I,[1,\,2,\,\dots,\,n])$,
and define $\mtx{B}(:,J)$ analogously.

The transpose of $\mtx{B}$ is denoted $\mtx{B}^{*}$, and we say that a matrix $\mtx{U}$
is \textit{orthonormal} if its  columns form an orthonormal set, so that $\mtx{U}^{*}\mtx{U} = I$.

\subsection{The singular value decomposition (SVD)}
\label{sec:SVD}
The SVD was introduced briefly in the introduction. Here we define it again, with some
more detail added. Let $\mtx{A}$ denote an $m\times n$ matrix, and set $r = \min(m,n)$.
Then $\mtx{A}$ admits a factorization
\begin{equation}
\label{eq:svd}
\begin{array}{ccccccccccccccccccccc}
\mtx{A} &=& \mtx{U} & \mtx{\Sigma} & \mtx{V}^{*},\\
m\times n && m\times r & r \times r & r\times n
\end{array}
\end{equation}
where the matrices $\mtx{U}$ and $\mtx{V}$ are orthonormal, and $\mtx{\Sigma}$ is diagonal.
We let $\{\vct{u}_{i}\}_{i=1}^{r}$ and $\{\vct{v}_{i}\}_{i=1}^{r}$ denote the columns of
$\mtx{U}$ and $\mtx{V}$, respectively. These vectors are the left and right singular vectors
of $\mtx{A}$. As in the introduction, the diagonal elements $\{\sigma_{j}\}_{j=1}^{r}$ of
$\mtx{\Sigma}$ are the singular values of $\mtx{A}$. We order these so that
$\sigma_{1}  \geq \sigma_{2} \geq \cdots \geq \sigma_{r} \geq 0$.
We let $\mtx{A}_{k}$ denote the truncation of the SVD to its first $k$ terms, as defined
by (\ref{eq:svdk}). It is easily verified that
\begin{equation}
\label{eq:minerrors}
\|\mtx{A} - \mtx{A}_{k}\|_{\rm spectral} = \sigma_{k+1},
\qquad\mbox{and that}\qquad
\|\mtx{A} - \mtx{A}_{k}\| = \left(\sum_{j=k+1}^{\min(m,n)} \sigma_{j}^{2}\right)^{1/2},
\end{equation}
where $\|\mtx{A}\|_{\rm spectral}$ denotes the operator norm of $\mtx{A}$ and
$\|\mtx{A}\|$ denotes the Frobenius norm of $\mtx{A}$. Moreover,
the Eckart-Young theorem \cite{1936_eckart_young} states that these errors are the smallest possible
errors that can be incurred when approximating $\mtx{A}$ by a matrix of rank $k$.


\subsection{The QR factorization}
\label{sec:QR}
Any $m\times n$ matrix $\mtx{A}$ admits a \textit{QR factorization} of the form
\begin{equation}
\label{eq:QR}
\begin{array}{ccccccccccccccccccccc}
\mtx{A} & \mtx{P} &=& \mtx{Q} & \mtx{R},\\
m\times n & n \times n && m\times r & r \times n
\end{array}
\end{equation}
where $r = \min(m,n)$, $\mtx{Q}$ is orthonormal, $\mtx{R}$ is upper triangular,
and $\mtx{P}$ is a permutation matrix. The permutation matrix $\mtx{P}$ can more efficiently
be represented via a vector $J_c \in \mathbb{Z}_{+}^{n}$ of column indices
such that $\mtx{P} = \mtx{I}(:,J_c)$ where $\mtx{I}$ is the $n\times n$ identity matrix.
Then (\ref{eq:QR}) can be written
\begin{equation}
\label{eq:QR2}
\begin{array}{ccccccccccccccccccccc}
\mtx{A}(\colon,J_c) &=& \mtx{Q} & \mtx{R},\\
m\times n && m\times r & r \times n
\end{array}
\end{equation}
The QR-factorization is often computed via column pivoting combined
with either the Gram-Schmidt process, Householder reflectors,
or Givens rotations \cite{MR3024913}. The resulting upper triangular $\mtx{R}$
then satisfies various decay conditions \cite{MR3024913}.
These techniques are all incremental, and can be stopped after the first $k$
terms have been computed to obtain a ``partial QR-factorization of $\mtx{A}$'':
\begin{equation}
\label{eq:QR3}
\begin{array}{ccccccccccccccccccccc}
\mtx{A}(\colon,J_c) &\approx& \mtx{Q}_{k} & \mtx{R}_{k}.\\
m\times n && m\times r & r \times n
\end{array}
\end{equation}
The main drawback of the
classical partial pivoted QR approximation is the difficulty to obtain substantial speedups
on multi-processor architectures.

\subsection{Orthonormalization}
\label{sec:orth}
Given an $m\times \ell$ matrix $\mtx{X}$, with $m \geq \ell$, we introduce the function
$$
\mtx{Q} = \texttt{orth}(\mtx{X})
$$
to denote orthonormalization of the columns of $\mtx{X}$. In other words, $\mtx{Q}$ will be
an $m\times \ell$ orthonormal matrix whose columns form a basis for the column space of $\mtx{X}$.
In practice, this step is typically achieved most efficiently by a call to a packaged QR
factorization (e.g., in Matlab, we would write $\mtx{Q} = \texttt{qr}(\mtx{X},0)$).
This step could in principle be implemented without pivoting, which makes this call
efficient.

\section{Construction of low-rank approximations via randomized sampling}
\label{sec:random}

\subsection{A basic randomized scheme}
\label{sec:randQB}
Let $\mtx{A}$ be a given $m\times n$ matrix whose singular values exhibit some
decay, and suppose that we seek a matrix $\mtx{Q}$ with $\ell$ orthonormal
columns such that
\begin{equation}
\label{eq:QBbasic}
\mtx{A} \approx \mtx{Q}\,\mtx{B},
\qquad\mbox{where}\qquad \mtx{B} = \mtx{Q}^{*}\,\mtx{A}.
\end{equation}
In other words, we seek a matrix $\mtx{Q}$ whose columns form an approximate
orthonormal basis for the column space of $\mtx{A}$.
A randomized procedure for solving this task was proposed in
\cite{2006_martinsson_random1_orig}, and later analyzed an elaborated in
\cite{2011_martinsson_random1,2011_martinsson_randomsurvey}.
A basic version of the scheme that we call ``\texttt{randQB}''
is given in Figure \ref{fig:randQB}.
Once \texttt{randQB} has been executed to produce the factors $\mtx{Q}$ and $\mtx{B}$
in (\ref{eq:QBbasic}), standard factorizations such as the QR factorization, or the
truncated SVD can easily be obtained, as described in Section
\ref{sec:standard_factorizations}.

\begin{figure}
\fbox{
\begin{minipage}{75mm}

\begin{tabbing}
\hspace{10mm} \= \hspace{60mm} \= \kill
     \> \textbf{function} $[\mtx{Q},\mtx{B}] = \texttt{randQB}(\mtx{A},\ell)$ \\[3mm]
(1)  \> $\mtx{\Omega} = \texttt{randn}(n,\ell)$ \\[1mm]
(2)  \> $\mtx{Q} = \texttt{orth}(\mtx{A}\mtx{\Omega})$\> {\color{blue}$\Cmm\,mn\ell + \Cqr\,m\ell^2$}\\[1mm]
(3)  \> $\mtx{B} = \mtx{Q}^{*}\mtx{A}$ \> {\color{blue}$\Cmm\,mn\ell$}
\end{tabbing}
\end{minipage}}
\caption{The most basic version of the randomized range finder. The algorithm
takes as input an $m\times n$ matrix $\mtx{A}$ and a target rank $\ell$, and produces
factors $\mtx{Q}$ and $\mtx{B}$ of sizes $m\times \ell$ and $\ell\times n$, respectively,
such that $\mtx{A} \approx \mtx{Q}\mtx{B}$.
Text in blue refers to computational cost, see Section \ref{sec:execution1} for notation.}
\label{fig:randQB}
\end{figure}

\subsection{Over sampling and theoretical performance guarantees}
\label{sec:theory}
The algorithm \texttt{randQB} described in Section \ref{sec:randQB} produces close
to optimal results for matrices whose singular values decay rapidly, provided that
some slight over-sampling is done. To be precise, if we seek to match the minimal
error for a factorization of rank $k$, then choose $\ell$ in \texttt{randQB} as
$$
\ell = k + s
$$
where $s$ is a small integer (say $s=10$).
It was shown in \cite[Thm.~10.5]{2011_martinsson_randomsurvey} that if $s \geq 2$, then
$$
\mathbb{E}\bigl[\|\mtx{A} - \mtx{Q}\mtx{B}\|\bigr]
\leq
\left(1 + \frac{k}{s-1}\right)\,
\left(\sum_{j=k+1}^{\min(m,n)}\sigma_{j}^{2}\right)^{1/2},
$$
where $\mathbb{E}$ denotes expectation. Recall from equation (\ref{eq:minerrors}) that
$\left(\sum_{j=k+1}^{\min(m,n)}\sigma_{j}^{2}\right)^{1/2}$ is the theoretically minimal
error in approximating $\mtx{A}$ by a matrix of rank $k$, so we miss the optimal bound
only by a factor of $\left(1 + \frac{k}{s-1}\right)$ (except for the over sampling, of course).
Moreover, it can be shown that the likelihood
of a substantial deviation from the expectation is extremely small \cite[Sec.~10.3]{2011_martinsson_randomsurvey}.

\begin{remark}
When errors are measured in the \textit{spectral norm}, as opposed to the Frobenius norm,
the randomized scheme is slightly further removed from optimality. Theorem 10.6
of \cite{2011_martinsson_randomsurvey} states that
\begin{equation}
\label{eq:spectralerror}
\mathbb{E}\bigl[\|\mtx{A} - \mtx{Q}\mtx{B}\|_{\rm spectral}\bigr]
\leq
\left(1 + \frac{k}{s-1}\right)\,\sigma_{k+1}
+
\frac{e \sqrt{k+s}}{s}\,
\left(\sum_{j=k+1}^{\min(m,n)}\sigma_{j}^{2}\right)^{1/2},
\end{equation}
where $e = 2.718\cdots$ is the basis of the natural exponent. We observe that in cases
where the singular values decay slowly, the right hand side of (\ref{eq:spectralerror})
is substantially larger than the theoretically optimal value of $\sigma_{k+1}$. For
such situation, the ``power scheme'' described in Section \ref{sec:RSVDpower} should be used.
\end{remark}

\subsection{Computing standard factorizations}
\label{sec:standard_factorizations}
The output of the randomized factorization scheme in Figure \ref{fig:randQB} is a factorization
$\mtx{A} \approx \mtx{Q}\mtx{B}$ where $\mtx{Q}$ is orthonormal, but no constraints have been
placed on $\mtx{B}$. It turns out that standard factorizations can efficiently be computed
from the factors $\mtx{Q}$ and $\mtx{B}$; in this section we describe how to get the QR, the
SVD, and ``interpolatory'' factorizations.

\subsubsection{Computing the low rank SVD}
\label{sec:rSVD}
To get a low rank SVD, cf.~Section \ref{sec:SVD}, we perform the full SVD on the $\ell \times n$ matrix $\mtx{B}$,
to obtain a factorization $\mtx{B} = \hat{\mtx{U}}\,\hat{\mtx{D}}\,\hat{\mtx{V}}$. Then,
\begin{equation*}
\mtx{A} \approx \mtx{Q} \mtx{B} = \mtx{Q} \hat{\mtx{U}} \hat{\mtx{D}} \hat{\mtx{V}}^*.
\end{equation*}
We can now choose a rank $k$ to use based on the decaying singular values of $\mtx{D}$. Once a
suitable rank has been chosen, we form the low rank SVD factors:
\begin{equation*}
\mtx{U}_k = \mtx{Q} \hat{\mtx{U}}(:,1:k),\qquad
\mtx{\Sigma}_k = \hat{\mtx{D}}(1:k,1:k),\qquad\mbox{and}\qquad
\mtx{V}_k = \hat{\mtx{V}}(:,1:k),
\end{equation*}
so that $\mtx{A} \approx \mtx{U}_k \mtx{\Sigma}_k \mtx{V}^*_k$.
Observe that the truncation undoes the over-sampling that was done and detects a
numerical rank $k$ that is typically very close to the optimal $\varepsilon$-rank.

\subsubsection{Computing the partial pivoted QR factorization}
To obtain the factorization $\mtx{A} \mtx{P} \approx \mtx{Q} \mtx{R}$, cf.~Section \ref{sec:QR},
from the QB decomposition, perform a QR factorization of the $\ell \times n$ matrix $\mtx{B}$ to
obtain $\mtx{B} \mtx{P} = \tilde{\mtx{Q}} \mtx{R}$. Then, set $\hat{\mtx{Q}} = \mtx{Q}\tilde{\mtx{Q}}$
to obtain
\begin{equation*}
\mtx{A} \mtx{P} \approx \mtx{Q} \mtx{B} \mtx{P} = \mtx{Q} \tilde{\mtx{Q}} \mtx{R} = \hat{\mtx{Q}} \mtx{R}.
\end{equation*}

\subsubsection{Computing interpolatory and CUR factorizations}
In applications such as data interpretation, it is often of interest to determine a
subset of the rows/columns of $\mtx{A}$ that form a good basis for its row/column space.
For concreteness, suppose that $\mtx{A}$ is an $m\times n$ matrix of rank $k$, and
that we seek to determine an index set $J$ of length $k$, and a matrix $\mtx{Y}$ of
size $k\times n$ such that
\begin{equation}
\label{eq:colID}
\begin{array}{ccccccccccccc}
\mtx{A} &\approx& \mtx{A}(:,J)&\mtx{Y}.\\
m\times n && m\times k & k\times n
\end{array}
\end{equation}
One can prove that there always exist such a factorization for which every entry of
$\mtx{Y}$ is bounded in modulus by $1$ (which is to say that the columns in
$\mtx{A}(:,J)$ form a well-conditioned basis for the range of $\mtx{A}$) and for
which $\mtx{Y}(:,J)$ is the $k\times k$ identity matrix \cite{2005_martinsson_skel}.
Now suppose that we have available a factorization $\mtx{A} = \mtx{Q}\mtx{B}$ where $\mtx{B}$
is of size $\ell\times n$. Then determine $J$ and $\mtx{Y}$ such that
\begin{equation}
\begin{array}{ccccccccccccc}
\mtx{B} &\approx& \mtx{B}(:,J)&\mtx{Y}.\\
\ell\times n && \ell\times k & k\times n
\end{array}
\end{equation}
This can be done using the techniques in, e.g., \cite{2005_martinsson_skel} or \cite{gu1996}.
Then (\ref{eq:colID}) holds \textit{automatically} for the index set $J$ and the matrix $\mtx{Y}$
that were constructed.
Using similar ideas, one can determine a set of rows that form a well-conditioned basis for
the row space, and also the so called CUR factorization
$$
\begin{array}{ccccccccccccc}
\mtx{A} &\approx& \mtx{C}&\mtx{U}&\mtx{R}^{*},\\
m\times n && m\times k & k\times k & k\times n
\end{array}
$$
where $\mtx{C}$ and $\mtx{R}$ consist of subsets of the columns and rows of $\mtx{A}$, respectively, cf.~\cite{2014arXivCURID}.

\section{A blocked version of the randomized range finder}
\label{sec:blocked}

In this section, we describe and analyze a \textit{blocked} version of the basic randomized scheme
described in Figure \ref{fig:randQB}. By blocking, we improve computational efficiency and simplify
implementation on parallel machines. Blocking also allows for \textit{adaptive rank determination}
to be incorporated for situations where the rank is not known in advance.
While blocking greatly helps with computational efficiency, it also creates some issues in terms of
aggregation of round-off errors; this problem can be eliminated using techniques described in
Section \ref{sec:roundoff}.

The algorithm described in this section is directly inspired by
Algorithm 4.2 of \cite{2011_martinsson_randomsurvey}; besides blocking, the scheme proposed
here is different in that the matrix $\mtx{A}$ is updated in a manner analogous to
``modified'' column-pivoted Gram-Schmidt. This updating allows the randomized stopping
criterion employed in \cite{2011_martinsson_randomsurvey} to be replaced with a
precise deterministic stopping criterion.

\subsection{Blocking}
\label{sec:basicblocking}
Converting the basic scheme in Figure \ref{fig:randQB} to a blocked scheme is in principle
straight-forward. Suppose that in addition to an $m\times n$ matrix $\mtx{A}$ and a rank
$\ell$, we have set a block size $b$ such that $\ell = sb$, for some integer $s$.
Then draw an $n\times \ell$ Gaussian random matrix $\mtx{\Omega}$, and partition it into
slices $\{\mtx{\Omega}_{j}\}_{j=1}^{s}$, each of size $n\times b$, so that
\begin{equation}
\label{eq:OmegaBlock}
\mtx{\Omega} = \bigl[\mtx{\Omega}_{1},\,\mtx{\Omega}_{2},\,\dots,\,\mtx{\Omega}_{s}\bigr].
\end{equation}
We analogously partition the matrices $\mtx{Q}$ and $\mtx{B}$ in groups of $b$ columns and $b$ rows, respectively,
$$
\mtx{Q} = \bigl[\mtx{Q}_{1},\,\mtx{Q}_{2},\,\dots,\,\mtx{Q}_{s}\bigr]
\qquad\mbox{and}\qquad
\mtx{B} = \left[\begin{array}{c} \mtx{B}_{1} \\ \mtx{B}_{2} \\ \vdots \\ \mtx{B}_{s}\end{array}\right].
$$
The blocked algorithm then proceeds to build the matrices $\{\mtx{Q}_{i}\}_{i=1}^{s}$ and
$\{\mtx{B}_{i}\}_{i=1}^{s}$ one at a time. We first initiate the algorithm by setting
\begin{equation}
\label{eq:def0}
\mtx{A}^{(0)} = \mtx{A}.
\end{equation}
Then step forwards, computing for $i = 1,\,2,\,\dots,\,s$ the matrices
\begin{align}
\label{eq:def1}
\mtx{Q}_{i}   =&\ \texttt{orth}\left(\mtx{A}^{(i-1)}\mtx{\Omega}_{i}\right),\\
\label{eq:def2}
\mtx{B}_{i}   =&\ \mtx{Q}_{i}^{*}\mtx{A}^{(i-1)},\\
\label{eq:def3}
\mtx{A}^{(i)} =&\ \mtx{A}^{(i-1)} - \mtx{Q}_{i}\mtx{B}_{i}.
\end{align}
We will next prove that the matrix $\mtx{\bar{Q}}_i = [\mtx{Q}_{1},\,\mtx{Q}_{2},\,\dots,\,\mtx{Q}_{i}]$ constructed
is indeed orthonormal, and that the matrix $\mtx{A}^{(i)}$ defined by (\ref{eq:def3}) is
the ``remainder'' after $i$ steps, in the sense that
$$
\mtx{A}^{(i)} =
\mtx{A} -
\bigl[\mtx{Q}_{1},\,\mtx{Q}_{2},\,\dots,\,\mtx{Q}_{i}\bigr]\,
\bigl[\mtx{Q}_{1},\,\mtx{Q}_{2},\,\dots,\,\mtx{Q}_{i}\bigr]^{*}\,
\mtx{A} = \mtx{A} - \mtx{\bar{Q}}_i \mtx{\bar{Q}}_i^* \mtx{A}.
$$
To be precise, we will prove the following proposition:

\begin{prop}
\label{prop:blocked}
Let $\mtx{A}$ be an $m\times n$ matrix.
Let $b$ denote a block size, and let $s$ denote the number of steps.
Suppose that the rank of $\mtx{A}$ is at least $sb$.
Let $\mtx{\Omega}$ be a Gaussian random matrix of size $n\times sb$, partitioned
as in (\ref{eq:OmegaBlock}), with each $\mtx{\Omega}_{j}$ of size $n\times b$.
Let $\{\mtx{A}^{(j)}\}_{j=0}^{i}$, $\{\mtx{Q}_{j}\}_{j=1}^{i}$, and $\{\mtx{B}_{j}\}_{j=1}^{i}$,
be defined by (\ref{eq:def0}) -- (\ref{eq:def3}). Set:
\begin{equation}
\label{eq:defP}
\mtx{P}_{i} = \sum_{j=1}^{i}\mtx{Q}_{j}\,\mtx{Q}_{j}^{*}.
\end{equation}
and
\begin{equation}
\mtx{\bar{Q}}_i = \bigl[\mtx{Q}_{1},\,\mtx{Q}_{2},\,\dots, \mtx{Q}_{i}\bigr] \quad \mbox{,} \quad \mtx{\bar{B}}_i = \bigl[\mtx{B}^*_{1},\,\mtx{B}^*_{2},\,\dots, \mtx{B}^*_{i}\bigr]^* \quad \mbox{,} \quad
\mtx{\bar{Y}}_i = \bigl[\mtx{A} \mtx{\Omega}_1,\,\mtx{A} \mtx{\Omega}_{2},\,\dots, \mtx{A} \mtx{\Omega}_{i}\bigr]
\end{equation}
Then for every $i = 1,2,\dots,s$, it is the case that:
\renewcommand{\labelenumi}{(\alph{enumi})}
\begin{enumerate}
\item The matrix $\mtx{\bar{Q}}_i$ is ON, so $\mtx{P}_{i}$ is an orthogonal projection.
\item $\mtx{A}^{(i)} = \bigl(\mtx{I} - \mtx{P}_{i}\bigr)\,\mtx{A} = \bigl(\mtx{I} - \mtx{\bar{Q}}_i \mtx{\bar{Q}}_i^*\bigr)\,\mtx{A}$ and $\mtx{\bar{B}}_i = \mtx{\bar{Q}}_i^* \mtx{A}$.
\item $R(\mtx{\bar{Q}}_i) =
       R(\mtx{\bar{Y}}_i)$.
\end{enumerate}
\end{prop}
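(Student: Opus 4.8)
The plan is to prove all three statements simultaneously by induction on $i$, since the orthonormality of $\mtx{\bar{Q}}_i$ (part (a)) is exactly what makes the projection and remainder formulas in part (b) work, and part (b) in turn feeds into the range statement (c). For the base case $i=1$, part (a) is immediate from $\mtx{Q}_1 = \texttt{orth}(\mtx{A}\mtx{\Omega}_1)$, part (b) reads $\mtx{A}^{(1)} = \mtx{A} - \mtx{Q}_1\mtx{Q}_1^*\mtx{A}$ which is just the definition (\ref{eq:def3}) combined with (\ref{eq:def2}), and part (c) is $R(\mtx{Q}_1) = R(\mtx{A}\mtx{\Omega}_1)$, again immediate from the definition of \texttt{orth}. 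I would note that the rank hypothesis on $\mtx{A}$ guarantees that $\mtx{A}^{(i-1)}\mtx{\Omega}_i$ has full column rank $b$ at each step, so that $\texttt{orth}(\cdot)$ genuinely returns $b$ orthonormal columns and the dimension counts are all as claimed.

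For the inductive step, assume (a)--(c) hold for $i-1$. The key observation is that by the induction hypothesis (b), $\mtx{A}^{(i-1)} = (\mtx{I} - \mtx{\bar{Q}}_{i-1}\mtx{\bar{Q}}_{i-1}^*)\mtx{A}$, so every column of $\mtx{A}^{(i-1)}$ — and hence of $\mtx{Q}_i = \texttt{orth}(\mtx{A}^{(i-1)}\mtx{\Omega}_i)$ — lies in $R(\mtx{\bar{Q}}_{i-1})^\perp$. This gives $\mtx{\bar{Q}}_{i-1}^*\mtx{Q}_i = 0$, and since $\mtx{Q}_i$ is itself orthonormal and $\mtx{\bar{Q}}_{i-1}$ is orthonormal by the induction hypothesis (a), the concatenation $\mtx{\bar{Q}}_i = [\mtx{\bar{Q}}_{i-1}\ \mtx{Q}_i]$ is orthonormal, proving (a). Consequently $\mtx{P}_i = \mtx{P}_{i-1} + \mtx{Q}_i\mtx{Q}_i^*$ is an orthogonal projection onto $R(\mtx{\bar{Q}}_i)$. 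For (b), I would write $\mtx{A}^{(i)} = \mtx{A}^{(i-1)} - \mtx{Q}_i\mtx{Q}_i^*\mtx{A}^{(i-1)} = (\mtx{I} - \mtx{Q}_i\mtx{Q}_i^*)(\mtx{I}-\mtx{P}_{i-1})\mtx{A}$ using (\ref{eq:def2})--(\ref{eq:def3}) and the induction hypothesis, then expand: since $\mtx{Q}_i^*\mtx{P}_{i-1} = (\mtx{\bar{Q}}_{i-1}^*\mtx{Q}_i)^*\mtx{\bar{Q}}_{i-1}^* = 0$, this collapses to $(\mtx{I} - \mtx{P}_{i-1} - \mtx{Q}_i\mtx{Q}_i^*)\mtx{A} = (\mtx{I}-\mtx{P}_i)\mtx{A}$. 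The claim $\mtx{\bar{B}}_i = \mtx{\bar{Q}}_i^*\mtx{A}$ follows by stacking: the induction hypothesis gives $\mtx{\bar{B}}_{i-1} = \mtx{\bar{Q}}_{i-1}^*\mtx{A}$, while $\mtx{B}_i = \mtx{Q}_i^*\mtx{A}^{(i-1)} = \mtx{Q}_i^*(\mtx{I}-\mtx{P}_{i-1})\mtx{A} = \mtx{Q}_i^*\mtx{A}$, again because $\mtx{Q}_i^*\mtx{P}_{i-1}=0$.

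For part (c), I would argue two inclusions. Since $\mtx{\bar{Q}}_i$ and $\mtx{\bar{Y}}_i$ both have $ib$ columns and (by the rank hypothesis) both have full column rank $ib$, it suffices to show $R(\mtx{\bar{Y}}_i) \subseteq R(\mtx{\bar{Q}}_i)$. For the first $i-1$ blocks this is the induction hypothesis (c). For the last block, $\mtx{A}\mtx{\Omega}_i = \mtx{A}^{(i-1)}\mtx{\Omega}_i + \mtx{P}_{i-1}\mtx{A}\mtx{\Omega}_i$ by part (b) applied at step $i-1$; the first term has range inside $R(\mtx{Q}_i) \subseteq R(\mtx{\bar{Q}}_i)$ by construction, and the second term has range inside $R(\mtx{\bar{Q}}_{i-1}) \subseteq R(\mtx{\bar{Q}}_i)$. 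Hence $R(\mtx{\bar{Y}}_i) \subseteq R(\mtx{\bar{Q}}_i)$, and equality follows by dimension count.

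The main obstacle is really just the bookkeeping in the inductive step for (b): one must be careful to track which cross-terms vanish and to invoke the orthogonality relation $\mtx{\bar{Q}}_{i-1}^*\mtx{Q}_i = 0$ at exactly the right places. A secondary subtlety is the full-rank claim that justifies the dimension counts in (c) and the well-definedness of $\texttt{orth}$; this is where the hypothesis that $\mathrm{rank}(\mtx{A}) \geq sb$ gets used, and it is worth a sentence to note that $\mtx{A}^{(i-1)}$ restricted to $R(\mtx{\bar{Q}}_{i-1})^\perp$ still has rank at least $sb - (i-1)b \geq b$, so a generic Gaussian $\mtx{\Omega}_i$ yields $b$ linearly independent columns almost surely.
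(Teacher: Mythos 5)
Your proposal is correct and follows essentially the same simultaneous induction as the paper: the same key orthogonality observation $\mtx{\bar{Q}}_{i-1}^{*}\mtx{Q}_{i}=\mtx{0}$ for (a) and (b), and the same decomposition of $\mtx{A}\mtx{\Omega}_{i}$ plus a dimension count for (c), merely running the containment in the opposite (equally easy) direction. No gaps.
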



\begin{proof}
The proof is by induction. We will several times use that if $\mtx{C}$ is a matrix
of size $n\times b$ of full rank, and we set $\mtx{Q} = \texttt{orth}(\mtx{C})$, then $R(\mtx{Q}) = R(\mtx{C})$,
where $R(\mtx{X})$ denotes the range of $\mtx{X}$.
We will also use the fact that if $\mtx{\Omega}$ is a Gaussian random matrix of size $n\times \ell$,
and $\mtx{E}$ is a matrix of size $m\times n$ with rank at least $\ell$, then the rank of $\mtx{E}\mtx{\Omega}$
is with probability 1 precisely $\ell$ \cite{2011_martinsson_randomsurvey}.

Direct inspection of the definitions show that (a), (b), (c) are all true for $i=1$.
Suppose all statements are true for $i-1$. We will prove that then (a), (b), (c) hold for $i$.

To prove that (a) holds for $i$, we use that (b) holds for $i-1$ and insert this into (\ref{eq:def1}) to get
\begin{equation}
\label{eq:ad}
\mtx{Q}_{i} = \texttt{orth}((\mtx{I} - \mtx{P}_{i-1})\mtx{A}\mtx{\Omega}_{i}).
\end{equation}
Then observe that $\mtx{P}_{i-1}$ is the orthogonal projection onto a space of dimension $b(i-1)$, which
means that the matrix $\mtx{A}^{(i-1)} = (\mtx{I} - \mtx{P}_{i-1})\mtx{A}$ has rank at least $bs - b(i-1) = b(s-i+1) \geq b$.
Consequently, $\mtx{A}^{(i-1)}\mtx{\Omega}_{i}$ has rank precisely $b$. This shows that
$$
R(\mtx{Q}_{i}) \subseteq R(\mtx{I} - \mtx{P}_{i-1}) = R(\mtx{P}_{i-1})^{\perp} =
R([\mtx{Q}_{1},\,\mtx{Q}_{2},\,\dots,\,\mtx{Q}_{i-1}])^{\perp}.
$$
It follows that $\mtx{Q}_{j}^{*}\mtx{Q}_{i} = \mtx{0}$ whenever $j<i$ which shows that
$\mtx{\bar{Q}}_i$ is ON. Next,
\begin{equation*}
\mtx{B}_i = \mtx{Q}_i^* \mtx{A}^{(i-1)} = \mtx{Q}_i^* \left( \mtx{I} - \mtx{Q}_{i-1} \mtx{Q}^*_{i-1} \right) \mtx{A}^{(i-2)} = \mtx{Q}_i^* \mtx{A}^{(i-2)} = \dots =  \mtx{Q}_i^* \mtx{A}^{(0)} = \mtx{Q}_i^* \mtx{A}
\end{equation*}
It follows that:
\begin{equation*}
\mtx{\bar{Q}}_i^* \mtx{A} =  [\mtx{Q}_1, \dots, \mtx{Q}_i]^* \mtx{A} =
\left[
\begin{matrix}
\mtx{Q}_1^* \mtx{A} \\
\vdots \\
\mtx{Q}_i^* \mtx{A}
\end{matrix}
\right] =
\left[
\begin{matrix}
\mtx{B}_1 \\
\vdots \\
\mtx{B}_i
\end{matrix}
\right] = \mtx{\bar{B}}_i
\end{equation*}
Thus, (a) holds for $i$.

Proving (b) is a simple calculation. Combining (\ref{eq:def2}) and (\ref{eq:def3}) we get
\begin{multline*}
\mtx{A}^{(i)} =
\mtx{A}^{(i-1)} - \mtx{Q}_{i}\mtx{Q}_{i}\mtx{A}^{(i-1)} =
(\mtx{I} - \mtx{Q}_{i}\mtx{Q}_{i}^{*})\mtx{A}^{(i-1)} \stackrel{(b)}{=}
(\mtx{I} - \mtx{Q}_{i}\mtx{Q}_{i}^{*})(\mtx{I} - \mtx{P}_{i-1})\mtx{A} =
(\mtx{I} - (\mtx{P}_{i-1} + \mtx{Q}_{i}\mtx{Q}_{i}^{*}))\mtx{A},
\end{multline*}
where in the last step we used that $\mtx{Q}_{i}^{*}\mtx{P}_{i-1} = \mtx{0}$.
Since $\mtx{P}_{i} = \mtx{P}_{i-1} + \mtx{Q}_{i}\mtx{Q}_{i}^{*}$, this proves (b).

To prove (c), we observe that (\ref{eq:ad}) implies that
\begin{equation}
\label{eq:LGG3}
R(\mtx{Q}_{i}) \subseteq
R([\mtx{A}\mtx{\Omega}_{i}, \mtx{P}_{i-1}\mtx{A}\mtx{\Omega}_{i}]).
\end{equation}
Induction assumption (c) tells us that
\begin{equation}
\label{eq:LGG4}
R(\mtx{P}_{i-1}\mtx{A}\mtx{\Omega}_{i}) \subseteq
R([\mtx{Q}_{1},\ \mtx{Q}_{2},\ \cdots,\ \mtx{Q}_{i-1}]) =
R([\mtx{A}\mtx{\Omega}_{1},\ \mtx{A}\mtx{\Omega}_{2},\ \cdots,\ \mtx{A}\mtx{\Omega}_{i-1}]).
\end{equation}
Combining (\ref{eq:LGG3}) and (\ref{eq:LGG4}), we find
\begin{equation}
\label{eq:ad2}
R(\mtx{Q}_{i}) \subseteq
R([\mtx{A}\mtx{\Omega}_{1},\ \mtx{A}\mtx{\Omega}_{2},\ \cdots,\ \mtx{A}\mtx{\Omega}_{i-1},\ \mtx{A}\mtx{\Omega}_{i}]).
\end{equation}
Equation (\ref{eq:ad2}) together with the induction assumption (c) imply that
$R([\mtx{Q}_{1},\,\mtx{Q}_{2},\,\dots,\,\mtx{Q}_{i}]) \subseteq
R([\mtx{A}\mtx{\Omega}_{1},\,\mtx{A}\mtx{\Omega}_{2},\,\cdots,\,\mtx{A}\mtx{\Omega}_{i}])$.
But both of these spaces have dimension precisely $bi$, so the fact that one is a subset
of the other implies that they must be identical.\end{proof}

Let us next compare the blocked algorithm defined by relations (\ref{eq:def0}) -- (\ref{eq:def3})
to the unblocked algorithm described in Figure \ref{fig:randQB}.
Let for a fixed Gaussian matrix $\mtx{\Omega}$, the output of the blocked version
be $\{\mtx{Q},\mtx{B}\}$ and let the output of the unblocked method be
$\{\tilde{\mtx{Q}},\,\tilde{\mtx{B}}\}$. These two pairs of matrices do not need to be identical.
(They depend on how exactly the QR factorizations are implemented, for instance).
However, Proposition \ref{prop:blocked} demonstrates that the projectors
$\mtx{Q}\mtx{Q}^{*}$ and $\tilde{\mtx{Q}}\tilde{\mtx{Q}}^{*}$ are identical.
To be precise, both of these matrices represent the orthogonal projection
onto the space $R(\mtx{A}\mtx{\Omega})$. This means that the error resulting
from the two algorithms are also identical
$$
\underbrace{\mtx{A} - \mtx{Q}\mtx{Q}^{*}\mtx{A}}_{\mbox{\textit{error from blocked algorithm}}} =
\underbrace{\mtx{A} - \tilde{\mtx{Q}}\tilde{\mtx{Q}}^{*}\mtx{A}}_{\mbox{\textit{error from non blocked algorithm}}}.
$$
Consequently, all theoretical results given in
\cite{2011_martinsson_randomsurvey}, cf.~Section \ref{sec:theory},
directly apply to the output of the blocked algorithm too.

\subsection{Adaptive rank determination}
The blocked algorithm defined by (\ref{eq:def0}) -- (\ref{eq:def3}) was in Section
\ref{sec:basicblocking} presented for the case where the rank $\ell$ of the approximation
is given in advance. A perhaps more common situation in practical applications is that
a precision $\varepsilon > 0$ is specified, and then we seek to compute an approximation
of as low rank as possible that is accurate to precision $\varepsilon$. Observe that
in the algorithm defined by (\ref{eq:def0}) -- (\ref{eq:def3}), we proved that after
step $i$ has been completed, we have
$$
\|\mtx{A}^{(i)}\| =
\|\mtx{A} - \mtx{P}_{i}\mtx{A}\| =
\|\mtx{A} - \bigl[\mtx{Q}_{1}\ \mtx{Q}_{2}\ \cdots\ \mtx{Q}_{i}\bigr]\,
            \bigl[\mtx{Q}_{1}\ \mtx{Q}_{2}\ \cdots\ \mtx{Q}_{i}\bigr]^{*}\,\mtx{A}\|.
$$
In other words, $\mtx{A}^{(i)}$ holds precisely the residual remaining after step $i$.
This means that incorporating adaptive rank determining is now trivial --- we simply
compute $\|\mtx{A}^{(i)}\|$ after completing step $i$, and break once $\|\mtx{A}^{(i)}\| \leq \varepsilon$.
The algorithm resulting is shown as \texttt{randQB\_b} in Figure \ref{fig:randQB_b}.
(The purpose of line (3') will be explained in Section \ref{sec:roundoff}.)

\begin{remark}
Recall that our default norm in this manuscript, the Frobenius norm, is simple to compute,
which means that the check on whether to break the loop on line (7) in Figure
\ref{fig:randQB_b} hardly adds at all to the execution time. If circumstances warrant the use of a
norm that is more expensive to compute, then some modification of the algorithm would be
required. Suppose, for instance, that we seek an approximation in the spectral norm. We
could then use the fact that the Frobenius norm is an upper bound on the spectral norm,
keep the Frobenius norm as the breaking condition, and then eliminate any ``superfluous''
degrees of freedom that were included in the post-processing step, cf.~Section \ref{sec:rSVD}.
(This approach would only be practicable for matrices whose singular
values exhibit reasonable decay, as otherwise the discrepancy in the $\varepsilon$-ranks
would be prohibitively large.)
\end{remark}

\subsection{Floating point arithmetic}
\label{sec:roundoff}
When the algorithm defined by (\ref{eq:def0}) -- (\ref{eq:def3}) is carried out in
finite precision arithmetic, a serious problem often arises in that round-off errors
will accumulate and will cause loss of orthonormality among the columns in
$\{\mtx{Q}_{1},\,\mtx{Q}_{2},\,\dots\}$. The problem is that as the computation proceeds,
the columns of each computed matrix $\mtx{Q}_{i}$ will due to round-off errors drift
into the span of the columns of $\{\mtx{Q}_{1},\,\mtx{Q}_{2},\,\dots,\,\mtx{Q}_{i-1}\}$.
To fix this problem, we explicitly reproject $\mtx{Q}_{i}$ away from the span of the
previously computed basis vectors \cite{BjorckReortho}. The line (3') in Figure \ref{fig:randQB_b}
represents the re-projection that is done to combat the accumulation of round-off errors.
(Note that if the algorithm is carried out in exact arithmetic, then
$\mtx{Q}_{j}^{*}\mtx{Q}_{i} = \mtx{0}$ whenever $j < i$, so line (3') would have no effect.)

\begin{figure}
\fbox{
\begin{minipage}{210mm}

\begin{tabbing}
\hspace{10mm} \= \hspace{5mm} \= \hspace{80mm} \=\kill
     \> \textbf{function} $[\mtx{Q},\mtx{B}] = \texttt{randQB\_b}(\mtx{A},\varepsilon,b)$\\[3mm]
(1)  \> \textbf{for} $i = 1,\,2,\,3,\,\dots$\\[1mm]
(2)  \> \> $\mtx{\Omega}_{i} = \texttt{randn}(n,b)$\\[1mm]
(3)  \> \> $\mtx{Q}_{i} = \texttt{orth}(\mtx{A}\mtx{\Omega}_{i})$ \> {\color{blue}$\Cmm mnb + \Cqr mb^2$}\\[1mm]
(3') \> \> $\mtx{Q}_{i} = \texttt{orth}(\mtx{Q}_{i} - \sum_{j=1}^{i-1}\mtx{Q}_{j}\mtx{Q}_{j}^{*}\mtx{Q}_{i})$
           \> {\color{blue}$2(i-1)\Cmm mb^2 + \Cqr mb^2$}\\[1mm]
(4)  \> \> $\mtx{B}_{i} = \vct{Q}_{i}^{*}\mtx{A}$
           \> {\color{blue}$\Cmm mnb$}\\[1mm]
(5)  \> \> $\mtx{A} = \mtx{A} - \mtx{Q}_{i}\mtx{B}_{i}$
           \> {\color{blue}$\Cmm mnb$}\\[1mm]
(6)  \> \> \textbf{if} $\|\mtx{A}\| < \varepsilon$ \textbf{then stop}\\[1mm]
(7)  \> \textbf{end for}\\[1mm]
(8) \> Set $\mtx{Q} = [\mtx{Q}_{1}\ \cdots\ \mtx{Q}_{i}]$ and $\mtx{B} = [\mtx{B}_{1}^{*}\ \cdots\ \mtx{B}_{i}^{*}]^{*}$.
\end{tabbing}
\end{minipage}}
\caption{A \textit{blocked} version of the randomized range finder, cf.~Fig~\ref{fig:randQB}.
The algorithm takes as input an $m\times n$ matrix $\mtx{A}$, a block size $b$, and a tolerance $\varepsilon$.
Its output are factors $\mtx{Q}$ and $\mtx{B}$ such that $\|\mtx{A} - \mtx{Q}\mtx{B}\|\leq \varepsilon$.
Note that if the algorithm is executed in exact arithmetic, then line (3') does nothing.
Text in blue refers to computational cost, see Section \ref{sec:execution1} for notation.}
\label{fig:randQB_b}
\end{figure}

\subsection{Comparison of execution times}
\label{sec:execution1}
Let us compare the computational cost of algorithms
\texttt{randQB} (Figure \ref{fig:randQB}) and
\texttt{randQB\_b} (Figure \ref{fig:randQB_b}).
To this end, let $\Cmm$ and $\Cqr$ denote the scaling
constants for the cost of executing a matrix-matrix
multiplication and a full QR factorization, respectively.
(While the algorithms only need the function
$\texttt{orth}$, cf.~Section \ref{sec:orth}, this cost is
for practical purposes the same as the cost for QR factorization.)
In other words, we assume that:
\begin{itemize}
\item Multiplying two matrices of sizes $m\times n$ and $n\times r$ costs $\Cmm\,mnr$.
\item Performing a QR factorization of a matrix of size $m\times n$, with $m\geq n$, costs $\Cqr\,mn^2$.
\end{itemize}
Note that these are rough estimates. Actual costs depend
on the actual sizes (note that the costs are dominated
by data movement rather than flops), but this model is
still instructive.
The execution time for the algorithm in Figure \ref{fig:randQB} is easily seen to be
\begin{equation}
\label{eq:T1}
T_{\texttt{randQB}} \sim 2\Cmm\,mn\ell + \Cqr\,m\ell^2.
\end{equation}
For the blocked algorithm of Figure \ref{fig:randQB_b}, we assume that
it stops after $s$ steps and set $\ell = sb$. Then
\begin{multline*}
T_{\texttt{randQB\_b}}
\sim \sum_{i=1}^{s}\left[3 \Cmm mnb + 2(i-1)\Cmm mb^2 + \Cqr 2mb^2\right] \\
\sim 3 s\Cmm mnb + s^2\Cmm mb^2 + s\Cqr 2mb^2.
\end{multline*}
Using that $sb = \ell$ we find
\begin{equation}
\label{eq:T2}
T_{\texttt{randQB\_b}} \sim 3 \Cmm mn\ell + \Cmm m\ell^2 + \frac{2}{s}\Cqr m\ell^2.
\end{equation}
Comparing (\ref{eq:T1}) and (\ref{eq:T2}), we see that the blocked algorithm
involves one additional term of $\Cmm mn\ell$, but on the other hand spends
less time executing full QR factorizations, as expected.

\begin{remark}
\label{remark:GPU}
All blocked algorithms that we present share the characteristic that they
slightly increase the amount of time spent on matrix-matrix multiplication,
while reducing the amount of time spent performing QR factorization. This
is a good trade-off on many platforms, but becomes particularly useful when
the algorithm is executed on a GPU. These massively multicore processors
are particularly efficient at performing matrix-matrix multiplications, but
struggle with communication intensive tasks such as a QR factorization.
\end{remark}

\section{A version of the method with enhanced accuracy}
\label{sec:power}

\subsection{Randomized sampling of a power of the matrix}
\label{sec:RSVDpower}
The accuracy of the basic randomized approximation scheme described in
Section \ref{sec:random}, and the blocked version of Section \ref{sec:blocked}
is well understood. The analysis of \cite{2011_martinsson_randomsurvey}
(see the synopsis in Section \ref{sec:theory}) shows that the error
$\|\mtx{A} - \mtx{A}_{\rm approx}\|$ depends strongly on the quantity
$\left(\sum_{j=k+1}^{\min(m,n)}\sigma_{j}^{2}\right)^{1/2}$. This implies
that the scheme is highly accurate for matrices whose singular values
decay rapidly, but less accurate when the ``tail'' singular values
have substantial weight. The problem becomes particularly pronounced
for large matrices. Happily, it was demonstrated in \cite{2009_szlam_power}
that this problem can easily be resolved when given a matrix with
slowly decaying singular values by simply applying a \textit{power}
of the matrix to be analyzed to the random matrix.
To be precise, suppose that we are given an $m \times n$ matrix $\mtx{A}$,
a target rank $\ell$, and a small integer $P$ (say $P=1$ or $P=2$).
Then the following formula will produce an ON matrix $\mtx{Q}$ whose
columns form an approximation to the range:
$$
\mtx{\Omega} = \texttt{randn}(n,\ell),
\qquad\mbox{and}\qquad
\mtx{Q} = \texttt{orth}\bigl(\bigl(\mtx{A}\mtx{A}^{*}\bigr)^{P}\mtx{A}\mtx{\Omega},0\bigr).
$$
The key observation here is that the matrix $\bigl(\mtx{A}\mtx{A}^{*}\bigr)^{P}\mtx{A}$
has exactly the same left singular values as $\mtx{A}$, but its singular values are
$\sigma_{j}^{2P+1}$ (observe that our objective is to build an ON-basis for the range
of $\mtx{A}$, and the optimal such basis consists of the leading left singular vectors).
Even a small value of $P$ will typically provide
enough decay that highly accurate results are attained. For a theoretical
analysis, see \cite[Sec.~10.4]{2011_martinsson_randomsurvey}.

When the ``power scheme'' idea is to be executed in floating point arithmetic,
substantial loss of accuracy happens whenever the singular values of $\mtx{A}$
have a large dynamic range. To be precise, if $\epsilon_{\rm mach}$ denotes
the machine precision, then any singular components smaller than
$\sigma_{1}\,\epsilon_{\rm mach}^{1/(2P+1)}$ will be lost. This problem
can be resolved by orthonormalizing the ``sample matrix'' between each
application of $\mtx{A}$ and $\mtx{A}^{*}$. This results in the scheme
we call \texttt{randQB\_p}, as shown in Figure \ref{fig:randQB_p}.
(Note that this scheme is virtually identical to a classical
subspace iteration with a random Gaussian matrix as the start \cite{MR1115470}.)

\begin{figure}
\fbox{
\begin{minipage}{140mm}
\begin{tabbing}
\hspace{10mm} \= \hspace{5mm} \= \hspace{65mm} \= \kill
    \> \textbf{function} $[\mtx{Q},\mtx{B}] = \texttt{randQB\_p}(\mtx{A},\ell,P)$\\[3mm]
(1) \> $\mtx{\Omega} = \texttt{randn}(n,\ell)$. \\[1mm]
(2) \> $\mtx{Q} = \texttt{orth}(\mtx{A}\mtx{\Omega})$. \>\> $\color{blue}\Cmm mn\ell + \Cqr m\ell^{2}$\\[1mm]
(3) \> \textbf{for} $j = 1:P$\\[1mm]
(4) \> \> $\mtx{Q} = \texttt{orth}(\mtx{A}^{*}\mtx{Q})$.\> $\color{blue}\Cmm mn\ell + \Cqr m\ell^{2}$\\[1mm]
(5) \> \> $\mtx{Q} = \texttt{orth}(\mtx{A}    \mtx{Q})$.\> $\color{blue}\Cmm mn\ell + \Cqr m\ell^{2}$\\[1mm]
(6) \> \textbf{end for}\\[1mm]
(7) \> $\mtx{B} = \mtx{Q}^{*}\mtx{A}$\>\> $\color{blue}\Cmm mn\ell$
\end{tabbing}

\end{minipage}}
\caption{An accuracy enhanced version of the basic randomized range finder in Figure \ref{fig:randQB}.
The algorithm takes as input an $m\times n$ matrix $\mtx{A}$, a rank $\ell$, and a ``power'' $P$ (see Section \ref{sec:RSVDpower}).
The output are matrices $\mtx{Q}$ and $\mtx{B}$ of sizes $m\times \ell$ and $\ell\times n$ such that
$\mtx{A} \approx \mtx{Q}\mtx{B}$. Higher $P$ leads to better accuracy, but also higher cost.
Setting $P=1$ or $P=2$ is often sufficient.
}
\label{fig:randQB_p}
\end{figure}

\subsection{The blocked version of the power scheme}
A blocked version of \texttt{randQB\_p} is easily obtained by a process analogous
to the one described in Section \ref{sec:basicblocking}, resulting
in the algorithm ``\texttt{randQB\_pb}'' in Figure \ref{fig:randQB_pb}.
Line (8) combats the problem of incremental loss of
orthonormality when the algorithm is executed in finite precision arithmetic,
cf.~Section \ref{sec:roundoff}.

\begin{figure}
\fbox{
\begin{minipage}{140mm}
\begin{tabbing}
\hspace{10mm} \= \hspace{5mm} \= \hspace{5mm} \= \hspace{65mm} \= \kill
    \> \textbf{function} $[\mtx{Q},\mtx{B}] = \texttt{randQB\_pb}(\mtx{A},\varepsilon,P,b)$\\[3mm]
(1) \> \textbf{for} $i = 1,\,2,\,3,\,\dots$ \\[1mm]
(2) \> \> $\mtx{\Omega}_{i} = \texttt{randn}(n,b)$. \\[1mm]
(3) \> \> $\mtx{Q}_{i} = \texttt{orth}(\mtx{A}\mtx{\Omega}_{i})$. \>\> $\color{blue}\Cmm mnb + \Cqr\,mb^{2}$\\[1mm]
(4) \> \> \textbf{for} $j = 1:P$\\[1mm]
(5) \> \> \> $\mtx{Q}_{i} = \texttt{orth}(\mtx{A}^{*}\mtx{Q}_{i})$.\> $\color{blue}\Cmm mnb + \Cqr mb^{2}$\\[1mm]
(6) \> \> \> $\mtx{Q}_{i} = \texttt{orth}(\mtx{A}    \mtx{Q}_{i})$.\> $\color{blue}\Cmm mnb + \Cqr mb^{2}$\\[1mm]
(7) \> \> \textbf{end for}\\[1mm]
(8) \> \> $\mtx{Q}_{i} = \texttt{orth}(\mtx{Q}_{i} - \sum_{j=1}^{i-1}\mtx{Q}_{j}\mtx{Q}_{j}^{*}\mtx{Q}_{i})$
          \>\> $\color{blue}2(i-1)\Cmm mb^2 + \Cqr mb^{2}$\\[1mm]
(9) \> \> $\mtx{B}_{i} = \vct{Q}_{i}^{*}\mtx{A}$
          \>\> $\color{blue}\Cmm mnb$\\[1mm]
(10) \> \> $\mtx{A} = \mtx{A} - \mtx{Q}_{i}\mtx{B}_{i}$
          \>\> $\color{blue}\Cmm mnb$\\[1mm]
(11)  \> \> \textbf{if} $\|\mtx{A}\| < \varepsilon$ \textbf{then stop}\\[1mm]
(12) \> \textbf{end while}\\[1mm]
(13) \> Set $\mtx{Q} = [\mtx{Q}_{1}\ \cdots\ \mtx{Q}_{i}]$ and $\mtx{B} = [\mtx{B}_{1}^{*}\ \cdots\ \mtx{B}_{i}^{*}]^{*}$.
\end{tabbing}
\end{minipage}}
\caption{A \textit{blocked} and \textit{adaptive} version of the accuracy enhanced algorithm shown
in Figure \ref{fig:randQB_p}. Its input and output are identical, except that we now provide a
tolerance $\varepsilon$ as an input (instead of a rank), and also a block size $b$.}
\label{fig:randQB_pb}
\end{figure}

\subsection{Computational complexity}
\label{sec:execution2}
When comparing the computational cost of
\texttt{randQB\_p} (cf.~Figure \ref{fig:randQB_p}) versus
\texttt{randQB\_pb} (cf.~Figure \ref{fig:randQB_pb}),
we use the notation that was introduced in Section \ref{sec:execution1}.
By inspection, we directly find that
$$
T_{\texttt{randQB\_p}} \sim \Cmm (2 + 2P)\,mn\ell + \Cqr(1 + 2P)m\ell^{2}.
$$
For the blocked scheme, inspection tells us that
$$
T_{\texttt{randQB\_pb}} \sim \sum_{i=1}^{s}
\left[\Cmm (3 + 2P)\,mnb + 2(i-1)\Cmm mb^2 + \Cqr(2 + 2P)mb^{2}\right].
$$
Executing the sum, and utilizing that $\ell = sb$, we get
$$
T_{\texttt{randQB\_pb}}
\sim \Cmm (3 + 2P)\,mn\ell + \Cmm m\ell^{2} + \frac{1}{s}\Cqr (2 + 2P)m\ell^{2}.
$$
In other words, the blocked algorithm again spends slightly more time executing
matrix-matrix multiplications, and quite a bit less time on qr-factorizations.
This trade is often favorable, and particularly so when the algorithm is executed
on a GPU (cf.~Remark \ref{remark:GPU}). On the other hand, when $\ell \ll n$, the benefit
to saving time on QR factorizations is minor.

\subsection{Is re-orthonormalizing truly necessary?}
\label{sec:necessary?}
Looking at algorithm \texttt{randQB\_p}, it is very
tempting to skip the intermediate QR factorizations and simply execute steps (2) -- (6) as:
\begin{center}
\fbox{\begin{minipage}{120mm}
\begin{tabbing}
\hspace{10mm} \= \hspace{5mm} \= \hspace{5mm} \= \hspace{65mm} \= \kill
(2) \> $\mtx{Y} = \mtx{A}\mtx{\Omega}$. \>\\[1mm]
(3) \> \textbf{for} $j = 1:P$\\[1mm]
(4) \> \> $\mtx{Y} = \mtx{A}\bigl(\mtx{A}^{*}\mtx{Y}\bigr)$.\\[1mm]
(5) \> \textbf{end for}\\[1mm]
(6) \> $\mtx{Q} = \texttt{orth}(\mtx{Y})$
\end{tabbing}
\end{minipage}}
\end{center}
This simplification does speed things up substantially, but as we mentioned earlier, it
can lead to loss of accuracy. In this section we state some conjectures
about when re-orthonormalization is necessary. These conjectures appear to show that
the blocked scheme is much more resilient to skipping re-orthonormalization.

To describe the issue, let us fix a (small) integer $P$, and define the matrix
$$
\mtx{A}_{P} = \bigl(\mtx{A}\mtx{A}^{*}\bigr)^{P}\mtx{A}.
$$
If the SVD of $\mtx{A}$ is $\mtx{A} = \mtx{U}\mtx{\Sigma}\mtx{V}^{*}$, then the SVD of $\mtx{A}_{P}$ is
$$
\mtx{A}_{P} = \mtx{U}\mtx{\Sigma}^{2P+1}\mtx{V}^{*}.
$$
In computing $\mtx{A} = \mtx{A}_{P}\mtx{\Omega}$,
we lose all information about any singular mode $i$ for which $\sigma_{i}^{2P+1} \leq \sigma_{1}^{2P+1}\epsilon_{\rm mach}$,
where $\epsilon_{\rm mach}$ is the machine precision. In other words, in order to accurately
resolve the first $k$ singular modes, re-orthogonalization is needed if
\begin{equation}
\label{eq:open1}
\frac{\sigma_{1}}{\sigma_{k}} > \epsilon_{\rm mach}^{1/(2P+1)}.
\end{equation}
As an example, with $P=2$ and $\epsilon_{\rm mach} = 10^{-15}$, we find that $\epsilon_{\rm mach}^{1/(2P+1)} = 10^{-3}$,
so re-orthonormalization is imperative resolve any components smaller than $\sigma_{1} \cdots 10^{-3}$. Moreover,
if we skip re-orthonormalization, we are likely to see an overall loss of accuracy affecting singular values and
singular vectors associated with larger singular values.

Next consider the blocked scheme. The crucial observation is that now, instead of trying to extract
the whole range of singular values $\{\sigma_{j}\}_{j=1}^{k}$ (and their associated eigenvectors) at
once, we now extract them in $s$ groups of $b$ modes each, where $k \approx sb$. This means that
we can expect to get reasonable accuracy as long as
\begin{equation}
\label{eq:open2}
\frac{\sigma_{(i-1)b+1}}{\sigma_{ib}} \leq \epsilon_{\rm mach}^{1/(2P+1)},
\qquad\mbox{for}\ i = 1,\,2,\,\dots,\,s.
\end{equation}
Comparing (\ref{eq:open1}) and (\ref{eq:open2}), we see that (\ref{eq:open2}) is a much milder condition,
in particular when the block size $b$ is much smaller than $k$.

All claims in this section are \textit{heuristics.} However, while they have not been rigorously proven,
they are supported by extensive numerical experiments, see Section \ref{sec:skipqr}.

\section{Numerical experiments}

In this section, we present numerical examples that illustrate the computational efficiency
and the accuracy of the proposed scheme, see Sections \ref{sec:speed} and \ref{sec:accuracy},
respectively. The codes we used are available at \texttt{http://amath.colorado.edu/faculty/martinss/main\_codes.html}
and we encourage any interested reader to try the methods out, and explore different parameter
sets than those included here.

\subsection{Comparison of execution speeds}
\label{sec:speed}

We first compare the run times of different techniques for computing a partial (rank $k$) QR factorization
of a given matrix $\mtx{A}$ of size $n\times n$. Observe that the choice of matrix is immaterial for a run
time comparison (we investigate accuracy in Section \ref{sec:accuracy}). We compared three  sets of
techniques:
\begin{itemize}
\item Truncating a \textit{full} QR factorization, computed using the Intel MKL libraries.
\item Taking $k$ steps of a column pivoted Gram-Schmidt process.
The implementation was accelerated by using MKL library functions whenever practicable.
\item The blocked ``QB'' scheme, followed by postprocessing of the factors to obtain a
QR factorization. We used the ``power method'' described in Section \ref{sec:power}
with parameters $P=0,1,2$.
\end{itemize}
The algorithms were all implemented in C and run on a desktop with a 6-core Intel Xeon E5-1660 CPU (3.30 GHZ), and 128GB of RAM.
We also ran the blocked ``QB'' scheme on an NVIDIA Tesla K40c GPU installed on the same machine,
using the Matlab GPU computing interface.
The results are shown in Figure \ref{fig:alg_timings}. Figure \ref{fig:alg_timings2} shows the dependence of the
runtime on the block size.

Figure \ref{fig:alg_timings} shows that our blocked algorithms (blue, magenta, and cyan lines)
compare favorably to both of the two benchmarks we chose --- full QR using MKL libraries (green)
and partial factorization using column pivoting (green). However, it must be noted that our
implementation of column pivoted QR is \textit{far} slower than the built-in QR factorization
in the MKL libraries. Even for as low of a rank as $k=100$, we do not break even with a full
factorization until $n=8\,000$. This implies that column pivoting can be implemented far more
efficiently than we were able to. The point is that in order to attain the efficiency of the
MKL libraries, very careful coding that is customized to any particular computing platform would
have to be done. In contrast, our blocked code is able to exploit the very high efficiency of
the MKL libraries with minimal effort.

Finally, it is worth nothing how particularly efficient our blocked algorithms are when executed
on a GPU. We gain a substantial integer factor speed-up over CPU speed in every test we conducted.

\begin{figure}
\centerline{
\includegraphics[height=80mm]{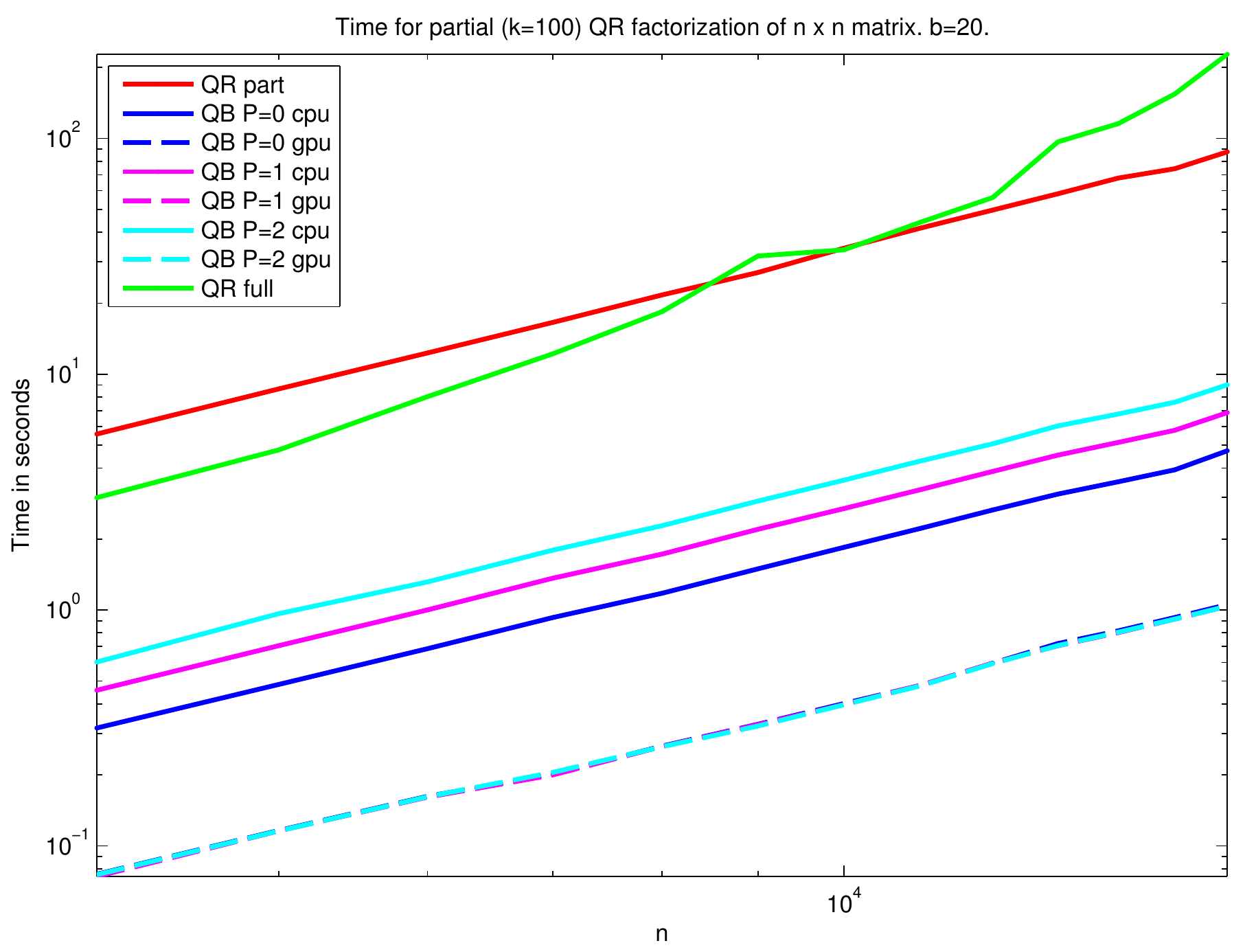}
}
\caption{Timing results for different algorithms on CPU and GPU.
The integer $P$ denotes the parameter in the ``power scheme'' described in Section \ref{sec:power}.}
\label{fig:alg_timings}
\end{figure}

\begin{figure}
\centerline{
\includegraphics[height=80mm]{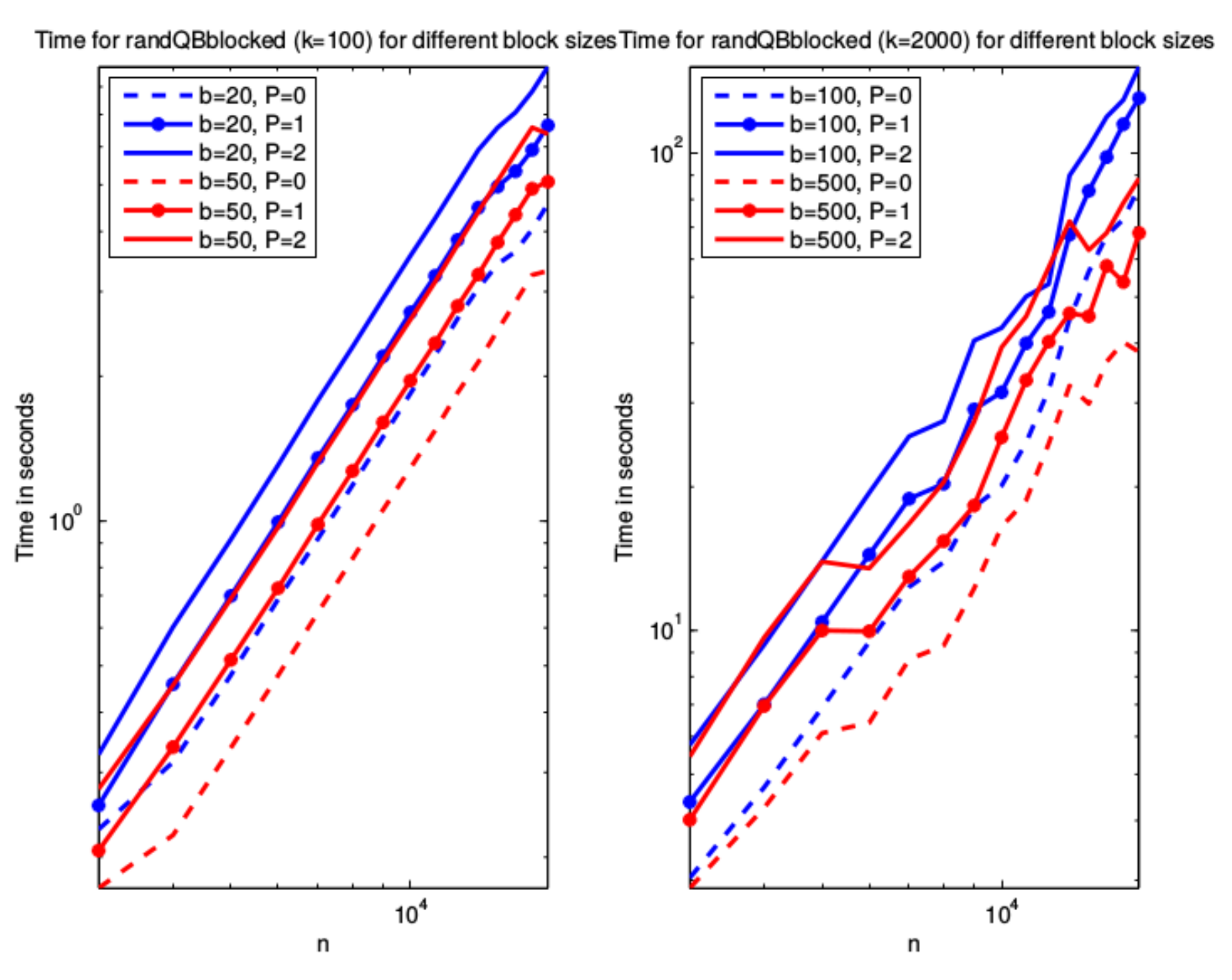}
}
\caption{Timing results for blocked QB scheme for different block sizes $b$.
The integer $P$ denotes the parameter in the ``power scheme'' described in Section \ref{sec:power}.}
\label{fig:alg_timings2}
\end{figure}

\subsection{Accuracy of the randomized scheme}
\label{sec:accuracy}
We next investigate the accuracy of the randomized schemes versus column-pivoted QR
on the one hand (easy to compute, not optimal) and versus the truncated SVD on the other
(expensive to compute, optimal). We used 5 classes of test matrices that each have
different characteristics:
\begin{description}
\item[Matrix 1 (fast decay)] Let $\mtx{A}_{1}$ denote an $m\times n$ matrix of the form
$\mtx{A}_{1} = \mtx{U}\mtx{D}\mtx{V}^{*}$ where $\mtx{U}$ and $\mtx{V}$ are randomly
drawn matrices with orthonormal columns (obtained by performing qr on a random
Gaussian matrix), and where $\mtx{D}$ is a diagonal matrix with entries roughly
given by $d_{j} = g_{j}^{2}\,\beta^{j-1}$ where $g_{j}$ is a random number drawn
from a uniform distribution on $[0,1]$ and $\beta = 0.65$. To precision $10^{-15}$,
the rank of $\mtx{A}_{1}$ is about 75.
\item[Matrix 2 (slow decay)] The matrix $\mtx{A}_{2}$ is formed just like $\mtx{A}_{1}$,
but now the diagonal entries of $\mtx{D}$ decay very slowly, with $d_{j} = (1 + 200(j-1))^{1/2}$.
\item[Matrix 3 (sparse)] The matrix $\mtx{A}_{3}$ is a sparse matrix given by
$\mtx{A}_{3} = \sum_{j=1}^{10} \frac{2}{j}\,\vct{x}_{j}\,\vct{y}_{j}^{*} + \sum_{j=11}^{\min(m,n)}\frac{1}{j}\,\vct{x}_{j}\,\vct{y}_{j}^{*}$
where $\vct{x}_{j}$ and $\vct{y}_{j}$ are random sparse vectors generated by the Matlab commands
$\texttt{sprand}(m,1,0.01)$ and $\texttt{sprand}(n,1,0.01)$, respectively. We used
$m=800$ and $n=600$ with resulted in a matrix with roughly $6\%$ non-zero elements.
This matrix was borrowed from Sorensen and Embree \cite{sorensen2014deim} and is an
example of a matrix for which column pivoted Gram-Schmidt performs particularly well.
\item[Matrix 4 (Kahan)] This is a variation of the ``Kahan counter-example'' which
is a matrix designed so that Gram-Schmidt performs particularly poorly.
The matrix here is formed via the matrix matrix product $\mtx{S}\mtx{K}$ where:
\begin{equation*}
\mtx{S} = \left[\begin{matrix}
1 & 0 & 0 & 0 & \cdots \\
0 & \zeta & 0 & 0 & \cdots \\
0 & 0 & \zeta^2 & 0 & \cdots \\
0 & 0 & 0 & \zeta^3 & \cdots \\
\vdots & \vdots & \vdots & \vdots & \ddots \\
\end{matrix}\right] \quad \mbox{and} \quad
\mtx{K} = \begin{bmatrix}
1 & -\phi & -\phi & -\phi & \cdots \\
0 & 1 & -\phi & -\phi & \cdots \\
0 & 0 & 1 & -\phi & \cdots \\
0 & 0 & 0 & 1 & \cdots \\
\vdots & \vdots & \vdots & \vdots & \ddots \\
\end{bmatrix}
\end{equation*}
with random $\zeta,\phi>0,\ \zeta^2+\phi^2=1$.
Then $\mtx{S} \mtx{K}$ is upper triangular, and for many choices of $\zeta$ and $\phi$,
classical column pivoting will yield poor performance as the different column norms will be similar and pivoting will
generally fail. The rank-$k$ approximation resulting from column pivoted QR is
substantially less accurate than the optimal rank-$k$ approximation resulting from
truncating the full SVD \cite{gu1996}. However, we obtain much better results than QR with the QB algorithm.

\item[Matrix 5 (S shaped decay)] The matrix $\mtx{A}_{5}$ is built in the
same manner as $\mtx{A}_{1}$ and $\mtx{A}_{2}$, but now the diagonal entries
of $\mtx{D}$ are chosen to first hover around 1, then decay rapidly, and then
level out at a relatively high plateau, cf.~Figure \ref{fig:acc_matrix5}.
\end{description}

We compare four different techniques for computing a rank-$k$ approximation to
our test matrices:
\begin{description}
\item[SVD] We computed the full SVD (using the Matlab command \texttt{svd}) and
then truncated to the first $k$ components.
\item[Column-pivoted QR] We implemented this using modified Gram-Schmidt with
reorthogonalization to ensure that orthonormality is strictly maintained in the
columns of $\mtx{Q}$.
\item[\texttt{randQB} --- single vector] This is the greedy algorithm labeled
``Algorithm 1'' in Section \ref{sec:greedy}, implemented with $\vct{q}_{j}$
on line (4) chosen as $\vct{q}_{j} = \vct{y}/\|\vct{y}\|$ where
$\vct{y} = \bigl(\mtx{A}\mtx{A}^{*}\bigr)^{P}\,\mtx{A}\,\vct{\omega}$ and
where $\vct{\omega}$ is a random Gaussian vector.
\item[\texttt{randQB} --- blocked] This is the algorithm \texttt{randQB\_pb}
shown in Figure \ref{fig:randQB_pb}.
\end{description}
The results are shown in Figure \ref{fig:acc_matrix1} -- \ref{fig:acc_matrix5}.
We make three observations: (1) When the ``power method'' described in Section
\ref{sec:power} is used, the accuracy of \texttt{randQB\_pb} exceeds that of
column-pivoted QR in every example we tried, even for as low of a power as $P=1$.
(2) Blocking appears to lead to no loss of accuracy. In most cases, there is
no discernible difference in accuracy between the blocked and the non-blocked
versions. (3) The accuracy of \texttt{randQB\_pb} is particularly good
when errors are measured in the Frobenius norm. In almost all cases we
investigated, essentially optimal results are obtained even for $P=1$.

Figures \ref{fig:acc_matrix1} -- \ref{fig:acc_matrix5} report the errors resulting
from a single instantiation of the randomized algorithm. Appendix \ref{app:stats}
provides more details on the statistical distribution of errors.

\begin{figure}
\includegraphics[width=\textwidth]{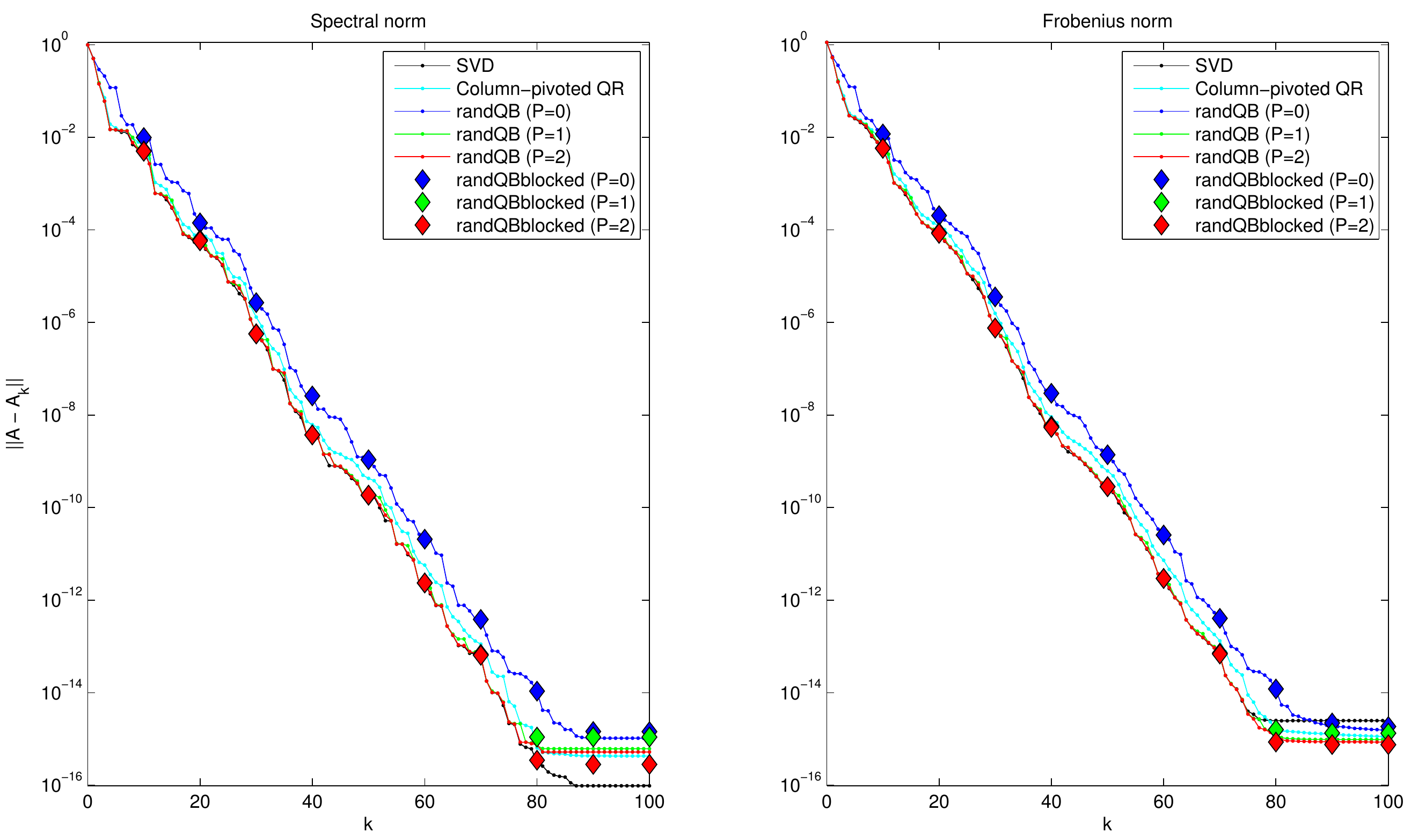}
\caption{Errors for the $800\times 600$ ``Matrix 1'' whose singular values decay very rapidly.
The block size is $b = 10$.}
\label{fig:acc_matrix1}
\end{figure}

\begin{figure}
\includegraphics[width=\textwidth]{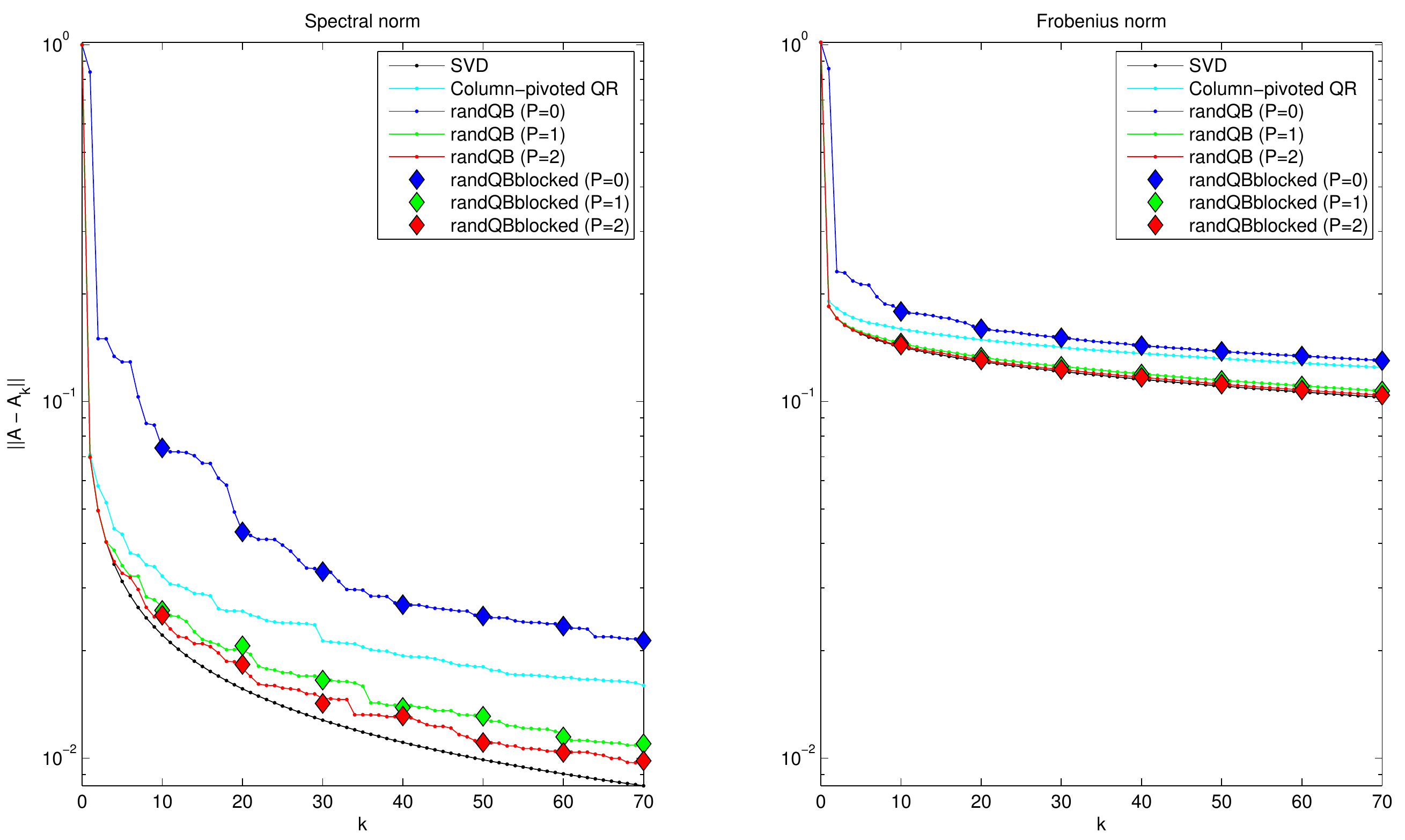}
\caption{Errors for the $800\times 600$ ``Matrix 2'' whose singular values decay slowly.
The block size is $b = 10$.}
\label{fig:acc_matrix2}
\end{figure}

\begin{figure}
\includegraphics[width=\textwidth]{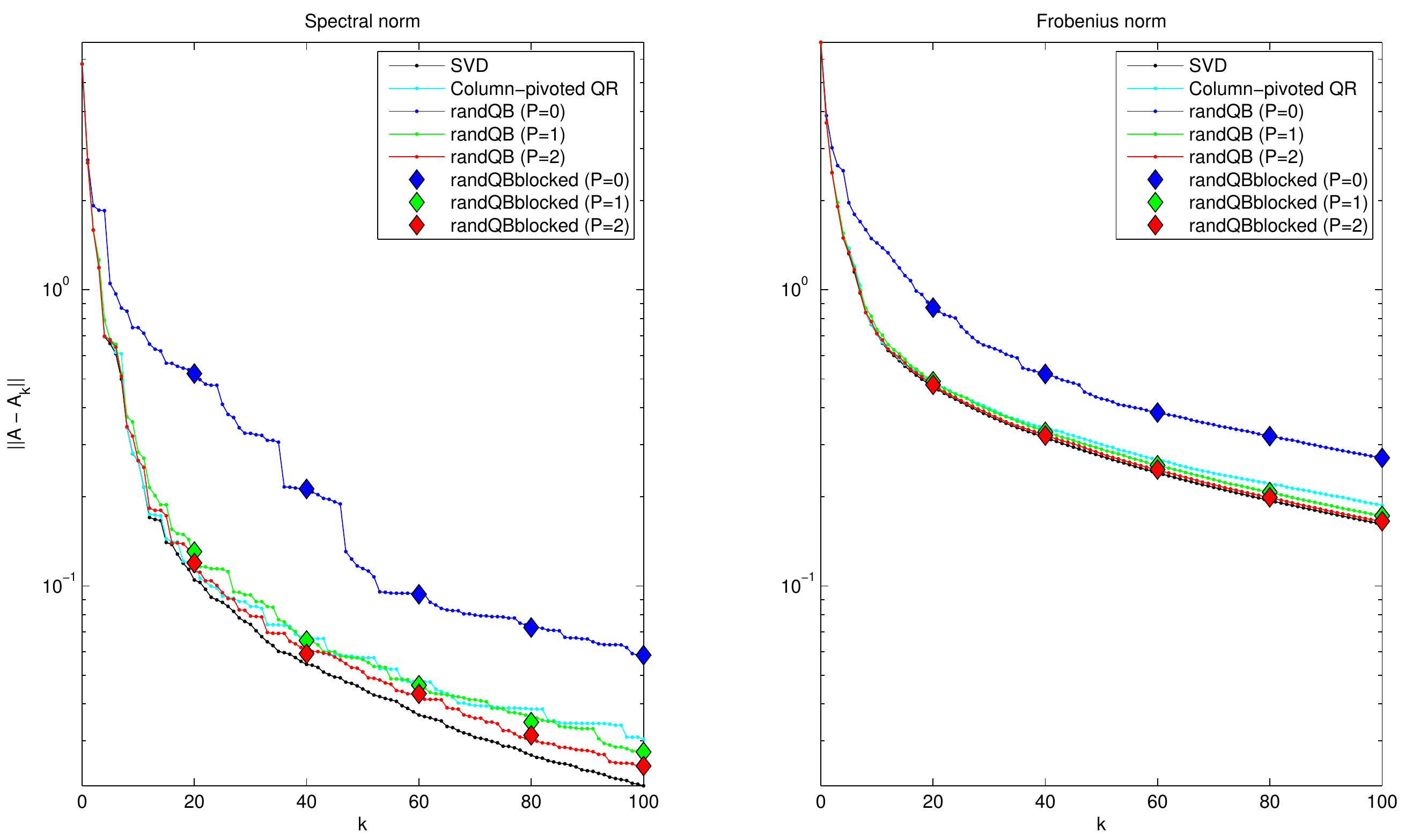}
\caption{Errors for the $800\times 600$ ``Matrix 3.'' This is a sparse matrix for
which column-pivoted Gram-Schmidt performs exceptionally well. However,
\texttt{randQB} still gives better accuracy whenever a power $P \geq 1$ is used.}
\label{fig:acc_matrix3}
\end{figure}

\begin{figure}
\includegraphics[width=\textwidth]{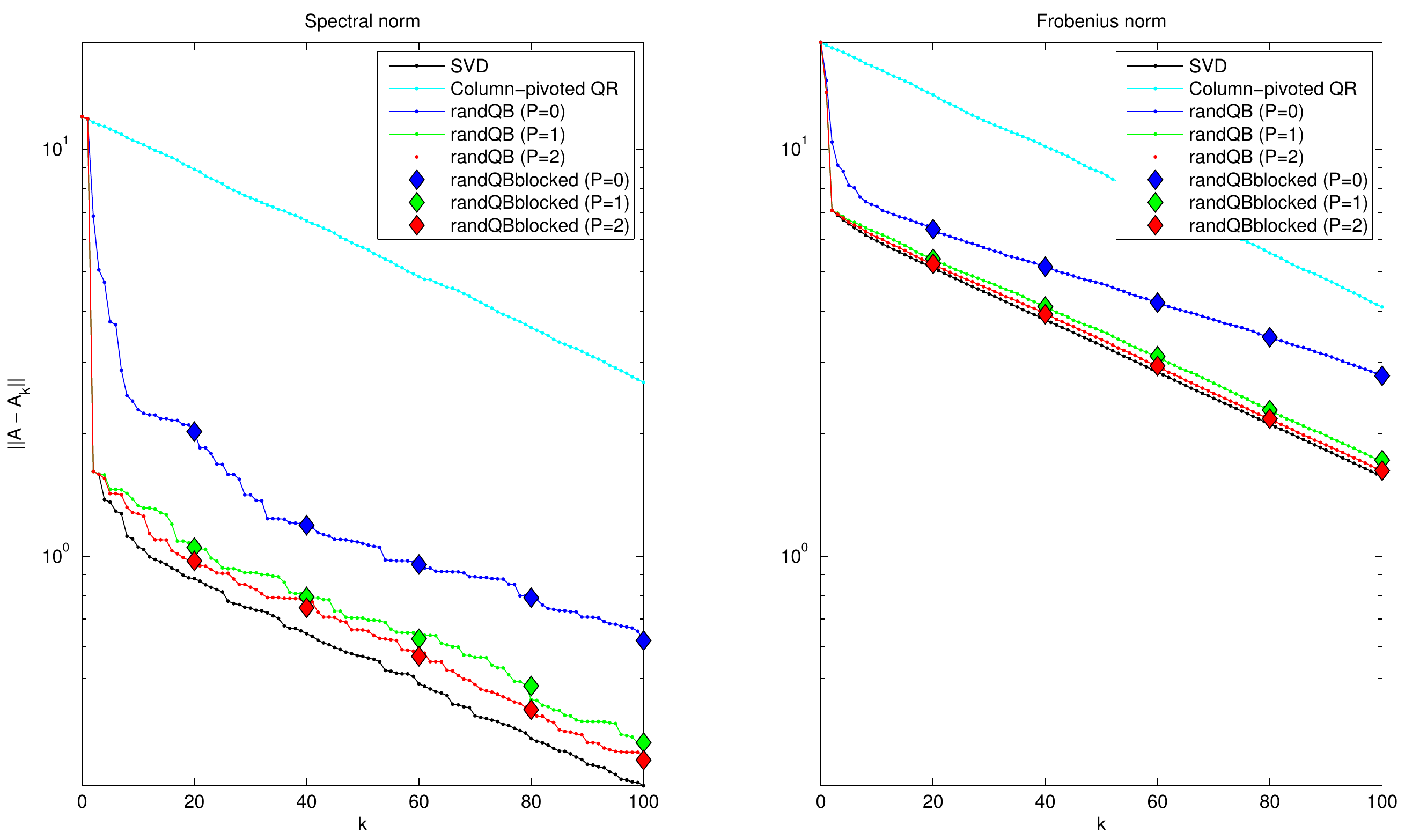}
\caption{Errors for the $1\,000\times 1\,000$ ``Matrix 4.'' This matrix is a variation of the
``Kahan counter-example'' and is designed specifically to give poor performance for column
pivoted QR. Here $b = 20$.}
\label{fig:acc_matrix4}
\end{figure}

\begin{figure}
\includegraphics[width=\textwidth]{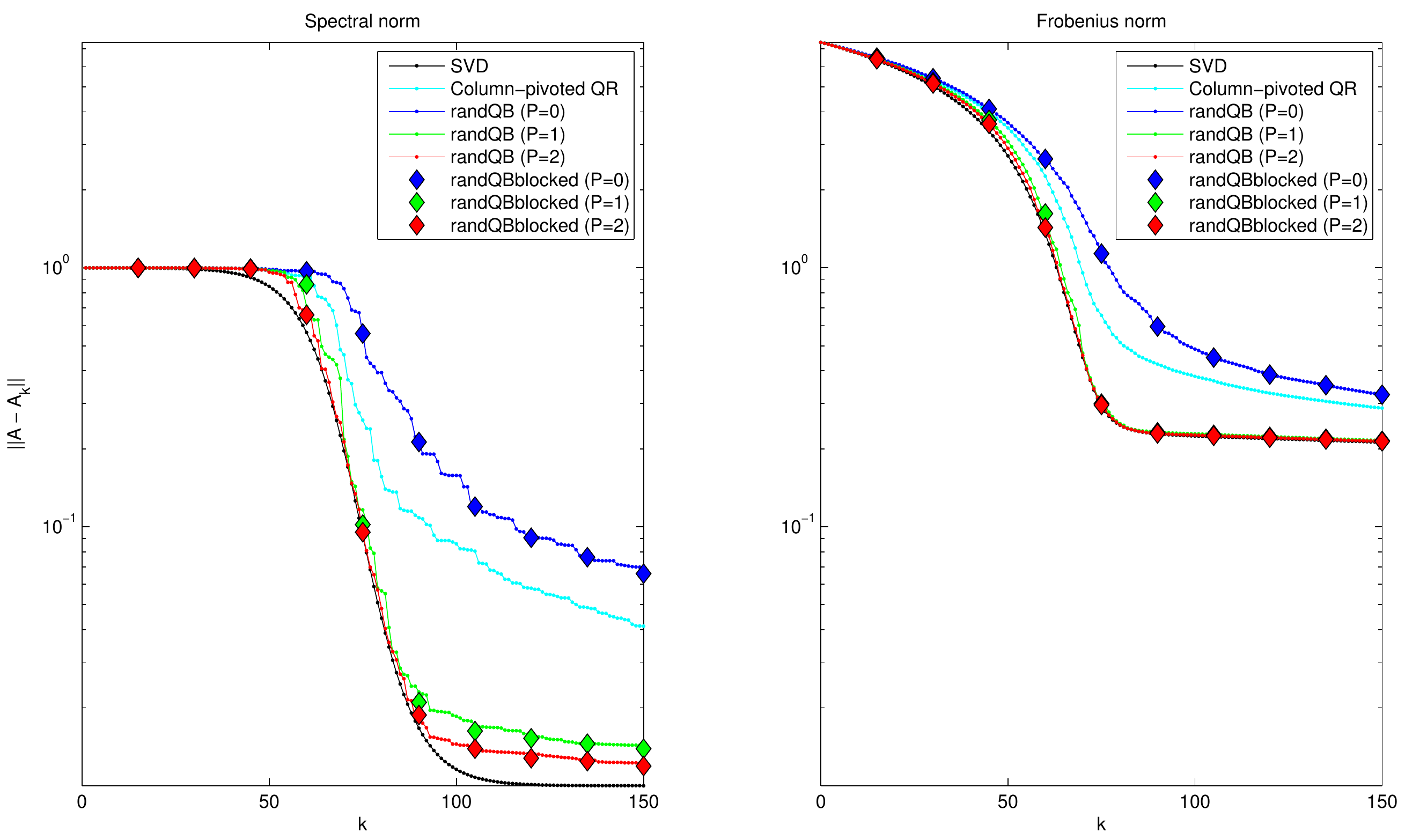}
\caption{Errors for the $800\times 600$ ``Matrix 5'' whose singular values show an ``S-shaped'' decay.
Here $b = 15$.}
\label{fig:acc_matrix5}
\end{figure}

\newpage
\subsection{When re-orthonormalization is required}
\label{sec:skipqr}
We claimed in Section \ref{sec:necessary?} that the blocked scheme is more
robust towards loss of orthonormality than the non-blocked scheme presented
in \cite{2011_martinsson_randomsurvey}. To test this hypothesis, we tested
what happens if we skip the re-orthonormalization between applications of
$\mtx{A}$ and $\mtx{A}^{*}$ in the algorithms shown in Figures
\ref{fig:randQB_p} and \ref{fig:randQB_pb}. The results are shown
in Figure \ref{fig:skipqr}. The key observation here is that the blocked versions
of \texttt{randQB} \textit{still} always yield excellent precision. When the block
size is large, the convergence is slowed down a bit compared to the more
meticulous implementation, but essentially optimal accuracy is nevertheless obtained
relatively quickly.

\begin{remark}
The numerical results in Figure \ref{fig:skipqr} substantiate the claim that
for the unblocked version, the best accuracy attainable is
$\sigma_{1}\,\epsilon_{\rm mach}^{1/(2P + 1)}$. In all examples, we have
$\sigma_{1} = 1$, so the prediction is that for $P=1$ the maximum precision is
$\bigl(10^{-15}\bigr)^{1/3} = 10^{-5}$ and for $P=2$ it is
$\bigl(10^{-15}\bigr)^{1/5} = 10^{-3}$. The results shown precisely follow this
pattern. Observe that for $\mtx{A}_{2}$, no loss of accuracy is seen at all since
the singular values we are interested in level out at about $10^{-2}$.
\end{remark}

\begin{figure}
\begin{tabular}{ccc}
\includegraphics[width=0.45\textwidth]{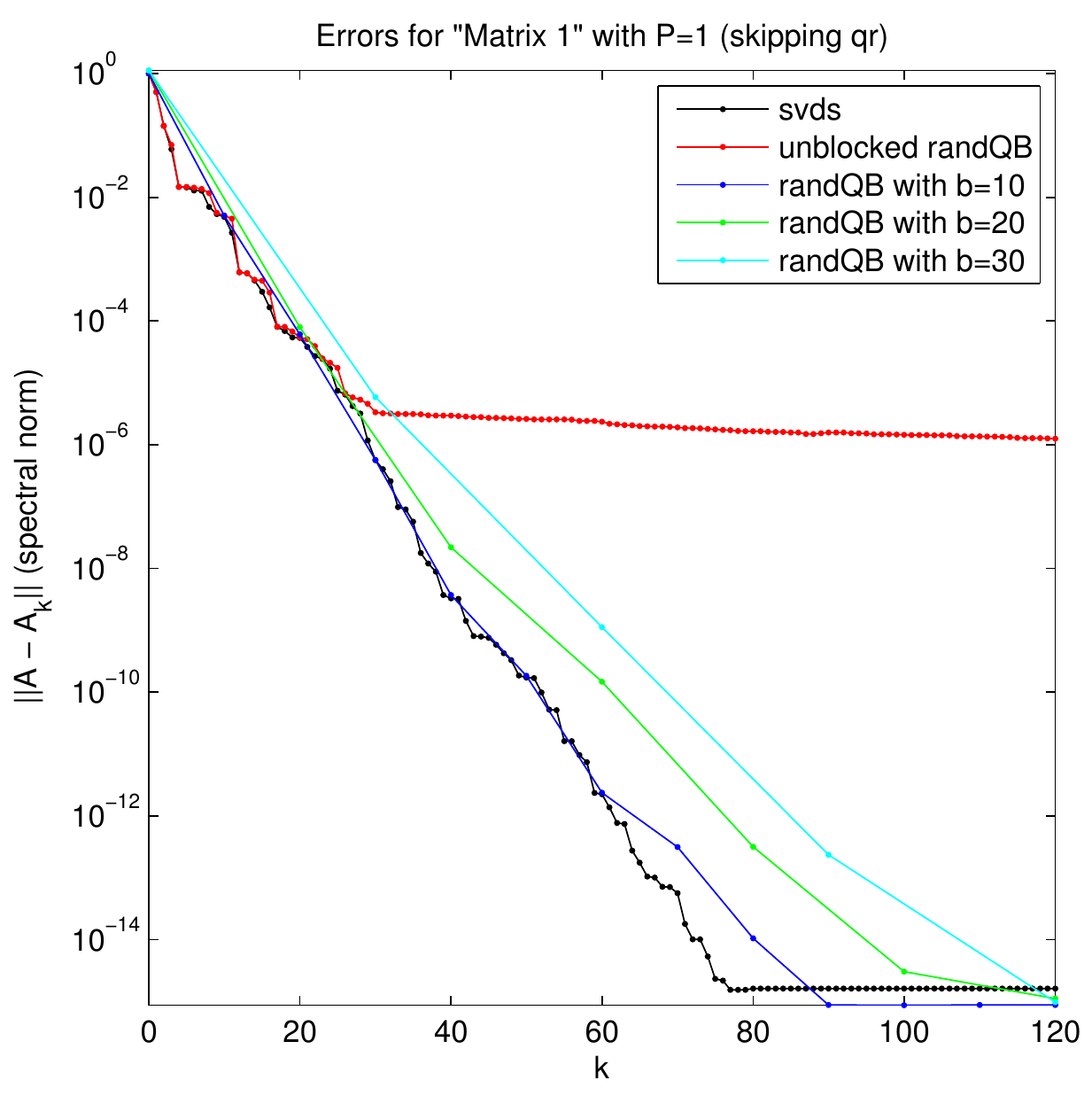}
&&
\includegraphics[width=0.45\textwidth]{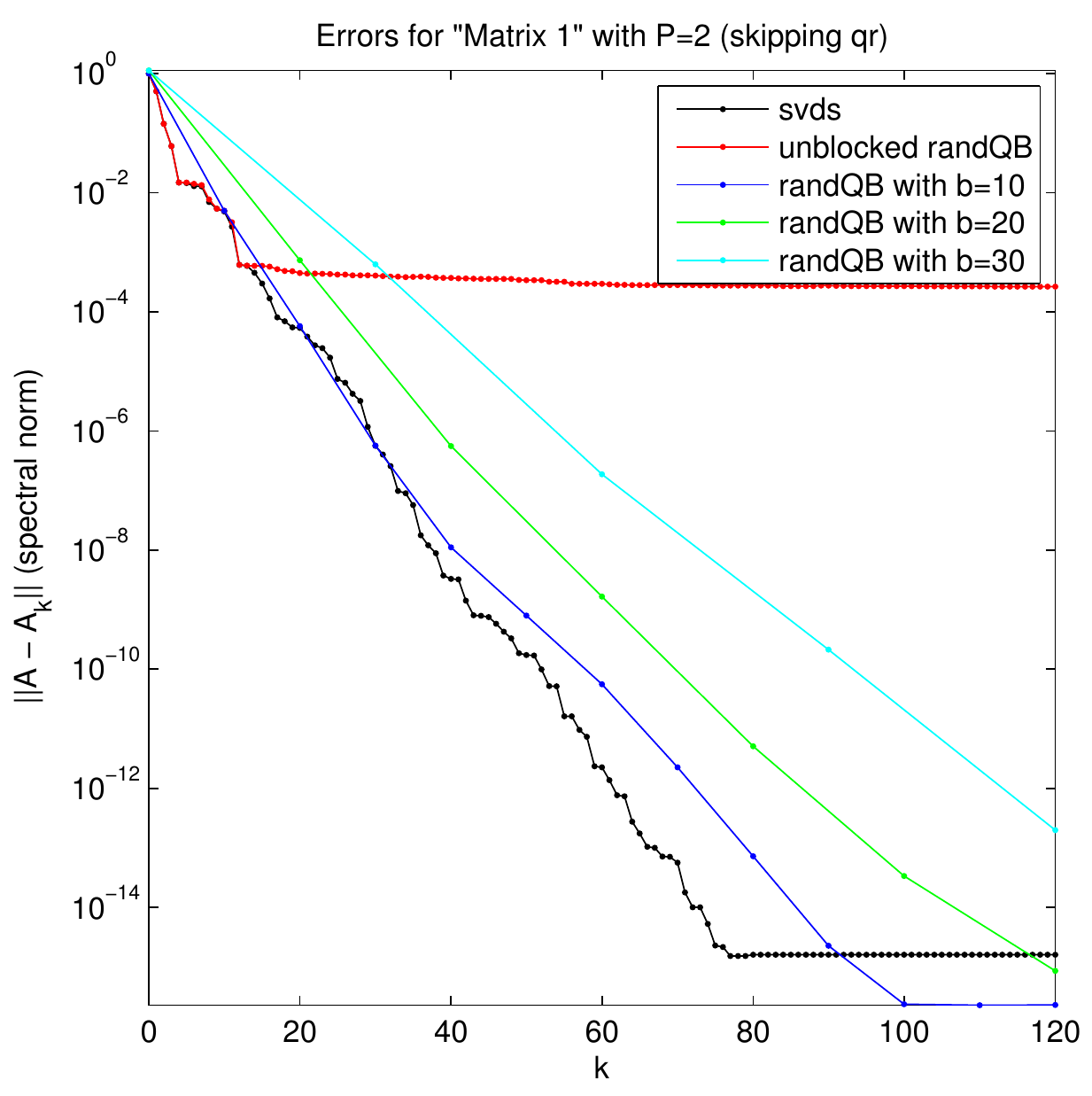}\\
\includegraphics[width=0.45\textwidth]{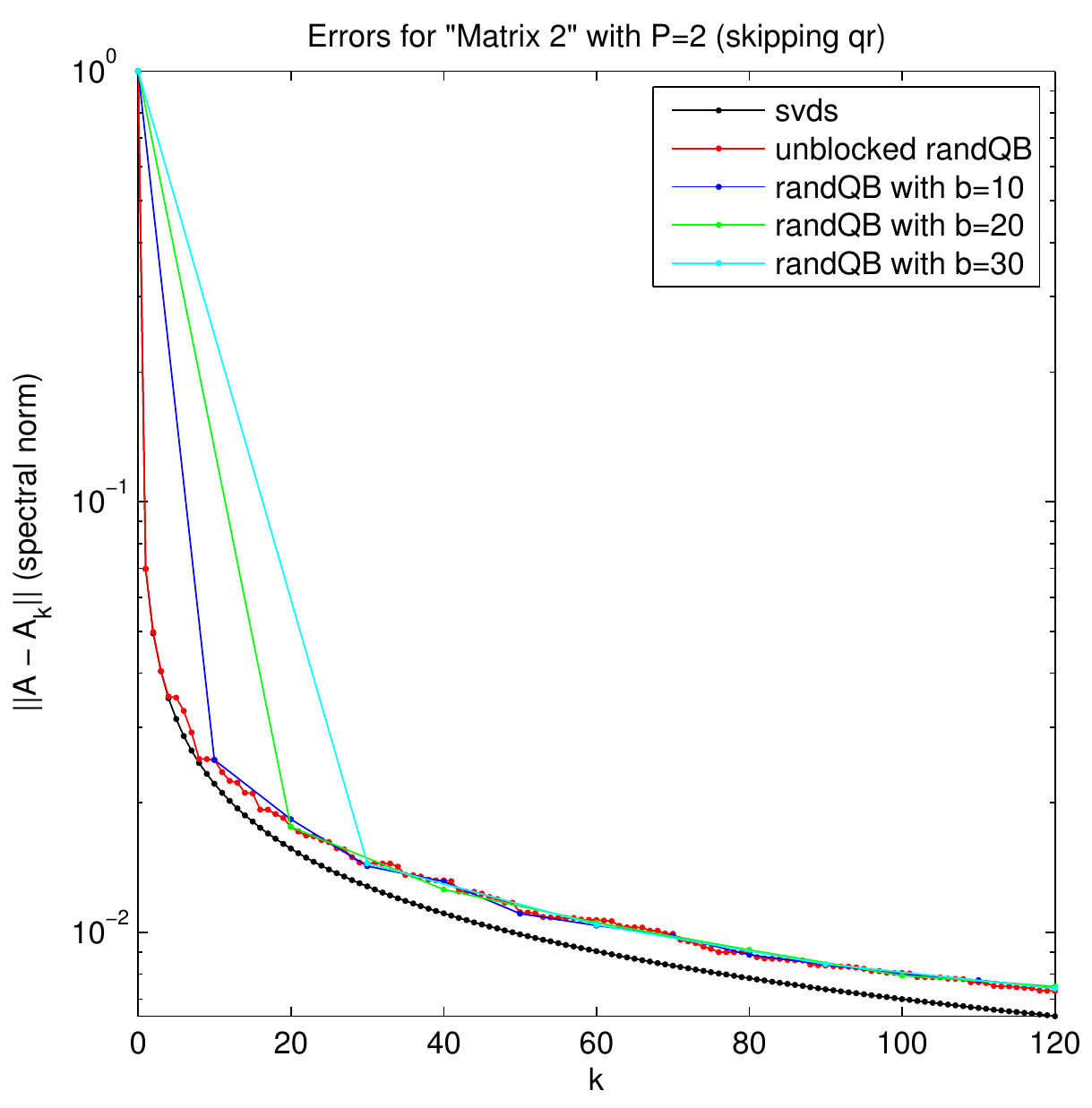}
&&
\includegraphics[width=0.45\textwidth]{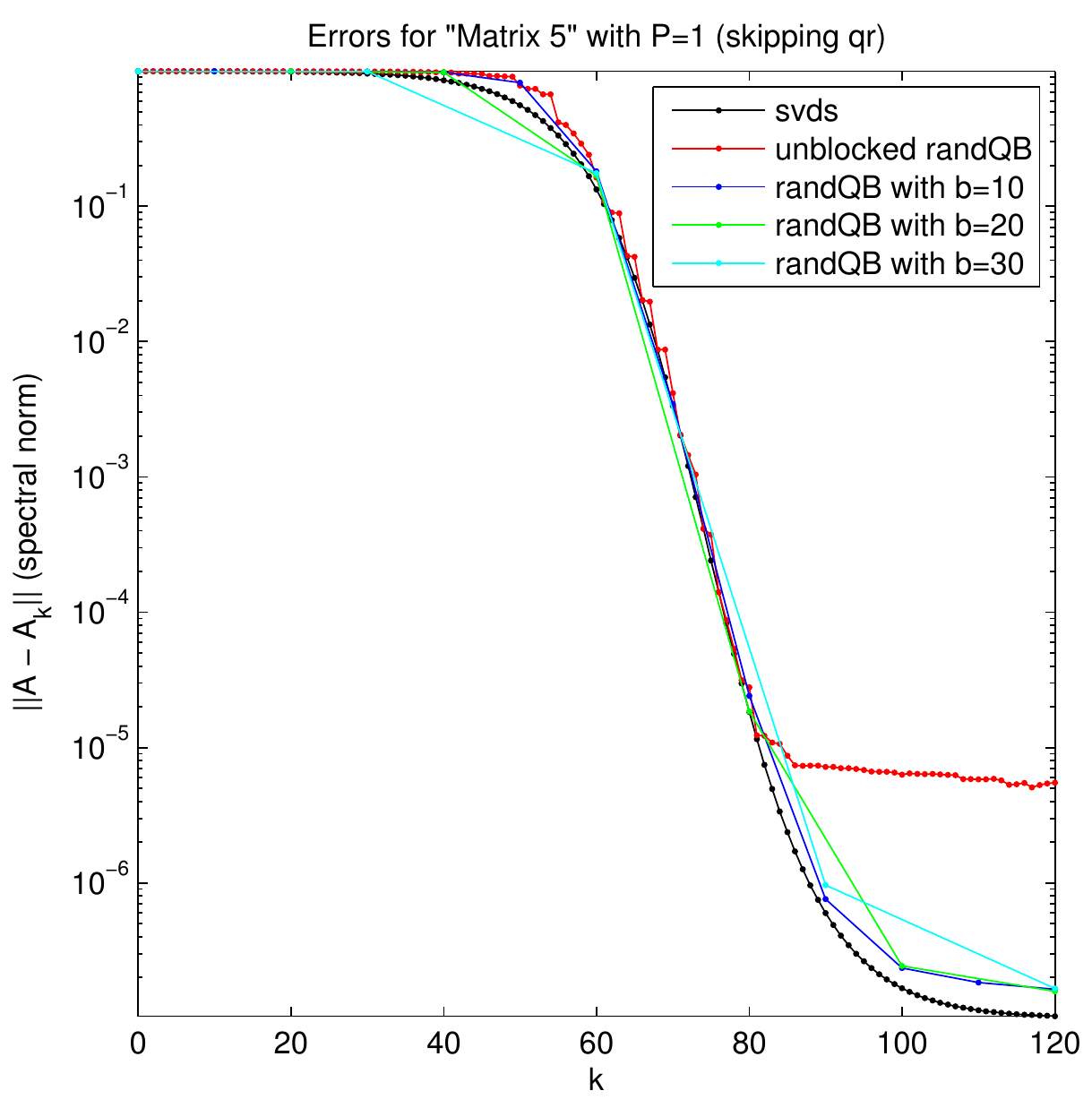}
\end{tabular}
\caption{Errors incurred when not re-orthonormalizing between applications of
$\mtx{A}$ and $\mtx{A}^{*}$ in the ``power method,'' cf.~Sections
\ref{sec:necessary?} and \ref{sec:skipqr}. 
The non-blocked scheme (red) performs precisely as predicted, and cannot resolve
anything beyond precision $10^{-5}$ when $P=1$ and $10^{-3}$ when $P=2$. The blocked
version converges slightly slower when skipping re-orthonormalization but always
reaches full precision.}
\label{fig:skipqr}
\end{figure}

\newpage
\section{Concluding remarks}

We have described a randomized algorithm for the low rank approximation of
matrices. The algorithm is based on the randomized sampling paradigm described in
\cite{2006_martinsson_random1_orig,2009_szlam_power,2011_martinsson_randomsurvey,2011_martinsson_random1}.
In this article, we introduce a \textit{blocking} technique which allows
us to incorporate adaptive rank determination without sacrificing computational
efficiency, and an \textit{updating} technique that allows us to replace the
randomized stopping criterion proposed in \cite{2011_martinsson_randomsurvey}
with a deterministic one. Through theoretical analysis and numerical examples,
we demonstrate that while the blocked scheme is mathematically equivalent to
the non-blocked scheme of \cite{2006_martinsson_random1_orig,2009_szlam_power,2011_martinsson_randomsurvey,2011_martinsson_random1}
when executed in exact arithmetic, the blocked scheme is slightly more robust towards
accumulation of round-off errors.

The updating strategy that we propose is directly inspired by a classical
scheme for computing a partial QR factorization via the column pivoted
Gram-Schmidt process. We demonstrate that the randomized version that we
propose is more computationally efficient than this classical scheme (since
it is hard to block the column pivoting scheme). Our numerical experiments
indicate that the randomized version not only improves speed, but also leads
to higher accuracy. In fact, in all examples we present, the errors resulting
from the blocked randomized scheme are very close to the optimal error obtained
by truncating a full singular value decomposition. In particular, when errors
are measured in the Frobenius norm, there is almost no loss of accuracy at all
compared to the optimal factorization, even for matrices whose singular values
decay slowly.

The scheme described can output any of the standard low-rank factorizations of \
matrices such as, e.g., a partial QR or SVD
factorization. It can also with ease produce less standard factorizations such
as the ``CUR'' and ``interpolative decompositions (ID)'',
cf.~Section \ref{sec:standard_factorizations}.


\textit{\textbf{Acknowledgements:}} The research reported was supported by DARPA,
under the contract N66001-13-1-4050, and by the NSF, under the contract DMS-1407340.

\bibliography{main_bib}
\bibliographystyle{amsplain}

\begin{appendix}

\section{Distribution of errors}
\label{app:stats}

The output of our randomized blocked approximation algorithms is a random variable,
since it depends on the drawing of a Gaussian matrix $\mtx{\Omega}$. It has been
proven (see, e.g., \cite{2011_martinsson_randomsurvey}) that due to concentration of
mass, the variation in this random variable is tiny. The output is for practical
purposes always very close to the expectation of the output. For this reason, when
we compared the accuracy of the randomized method to classical methods in
Section \ref{sec:accuracy}, we simply presented the results from one particular
draw of $\mtx{\Omega}$. In this section, we provide some more detailed numerical
experiments that illuminate exactly how little variation there is in the output of
the algorithm. We present the results from all matrices considered in Section \ref{sec:accuracy}
except Matrix 4 (the so called ``Kahan counter example'') since this is an artificial
example concocted specifically to give poor results for column-pivoted Gram Schmidt.

Figures \ref{fig:mean_err_matrix1} through \ref{fig:many_err_matrix5} provide more
information about the statistics of the outcome for the experiments reported for a
single instantiation in Figures \ref{fig:acc_matrix1} through \ref{fig:acc_matrix5}.
For each experiment, we show both the empirical expectation of the accuracy, and
the error paths from $25$ different instantiations. We observe that the errors are in
all cases tightly clustered, in particular for $P=1$ and $P=2$. We also observe that
when the singular values decay slowly, the clustering is stronger in the Frobenius norm
than in the spectral norm.

In our final set of experiments, we increased the number of experiments from $25$
to $2\,000$. To keep the plots legible, we plot the errors only for a fixed
value of $k$, see Figure \ref{fig:scatter_plots}. These experiments further
substantiate our claim that the results are tightly clustered, in particular
when the ``power method'' is used, and when the Frobenius norm is used.

\clearpage

\begin{figure}
\includegraphics[width=\textwidth]{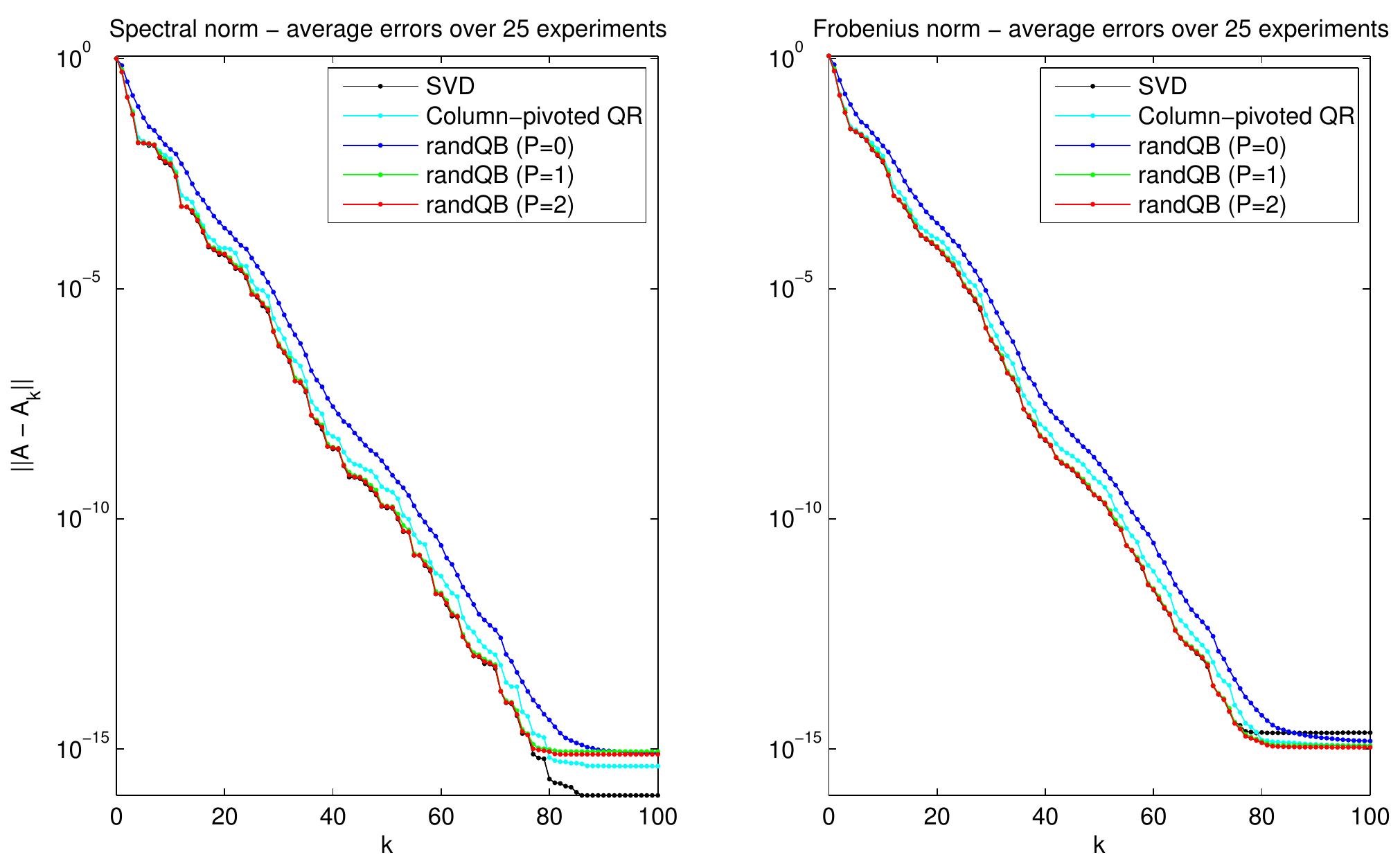}
\caption{The empirical mean errors from 25 instantiations of the randomized
factorization algorithm applied to Matrix 1.}
\label{fig:mean_err_matrix1}
\end{figure}

\begin{figure}
\includegraphics[width=\textwidth]{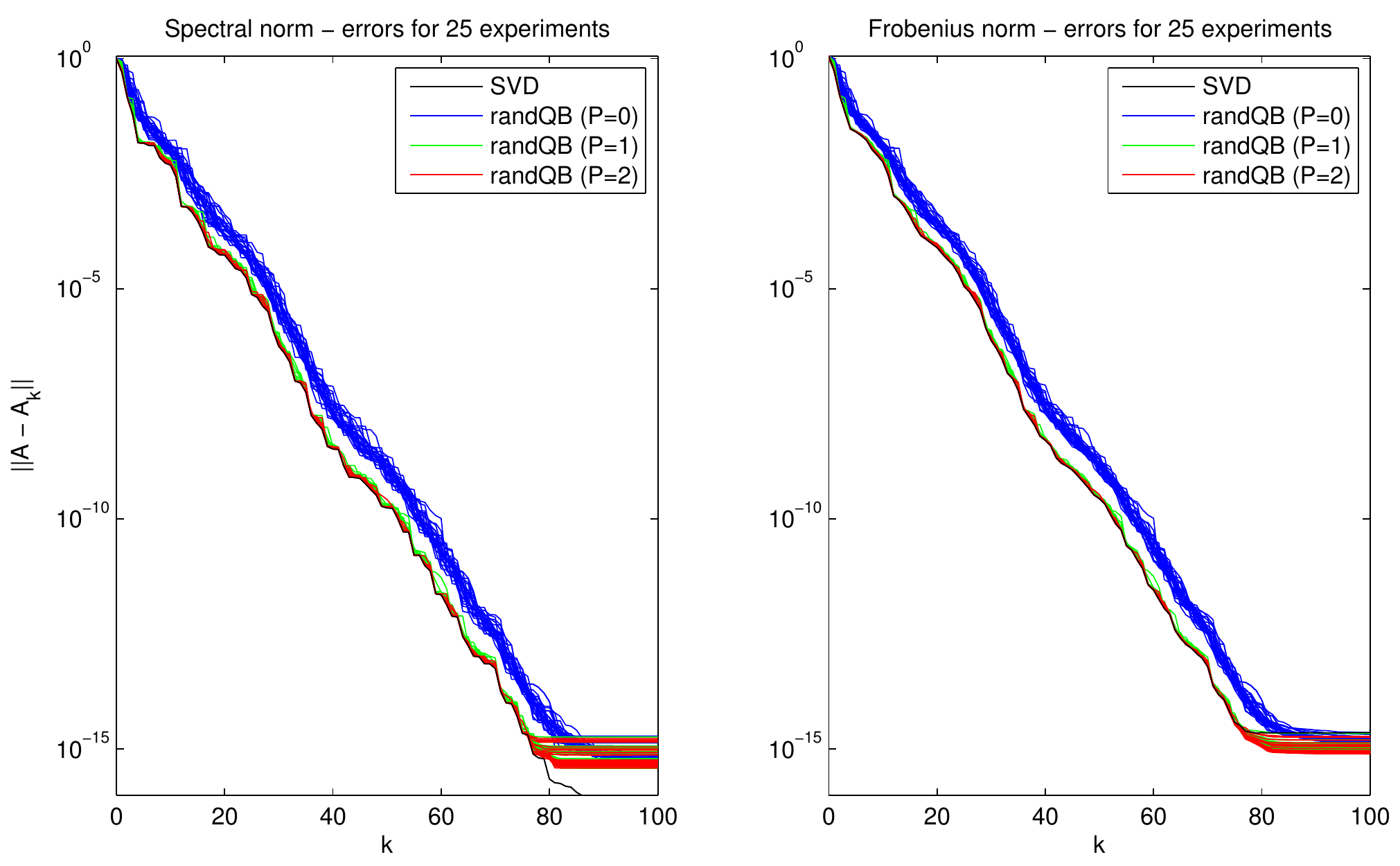}
\caption{The error paths for 25 instantiations of the randomized
factorization algorithm applied to Matrix 1.}
\label{fig:many_err_matrix1}
\end{figure}

\begin{figure}
\includegraphics[width=\textwidth]{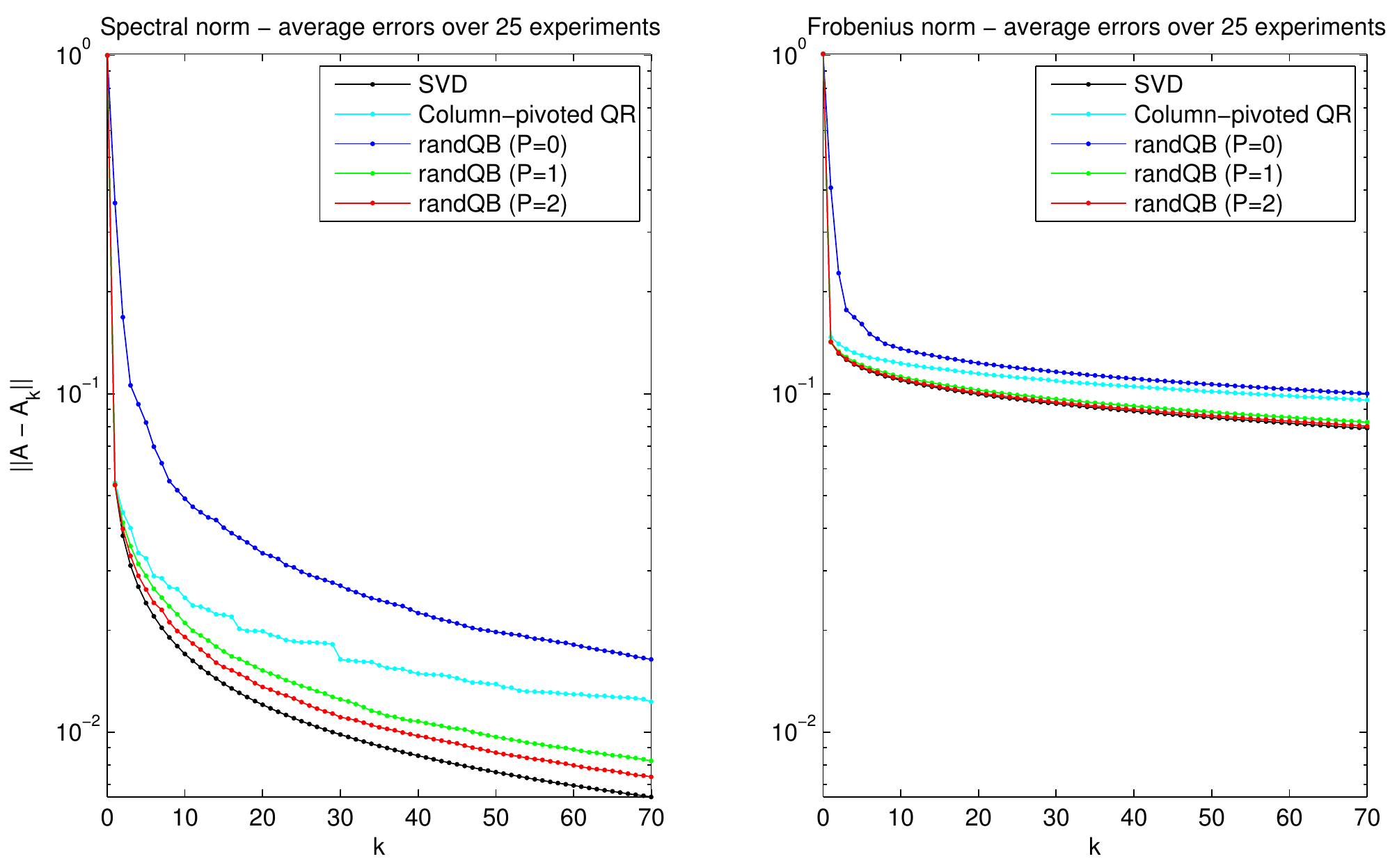}
\caption{The empirical mean errors from 25 instantiations of the randomized
factorization algorithm applied to Matrix 2.}
\label{fig:mean_err_matrix2}
\end{figure}

\begin{figure}
\includegraphics[width=\textwidth]{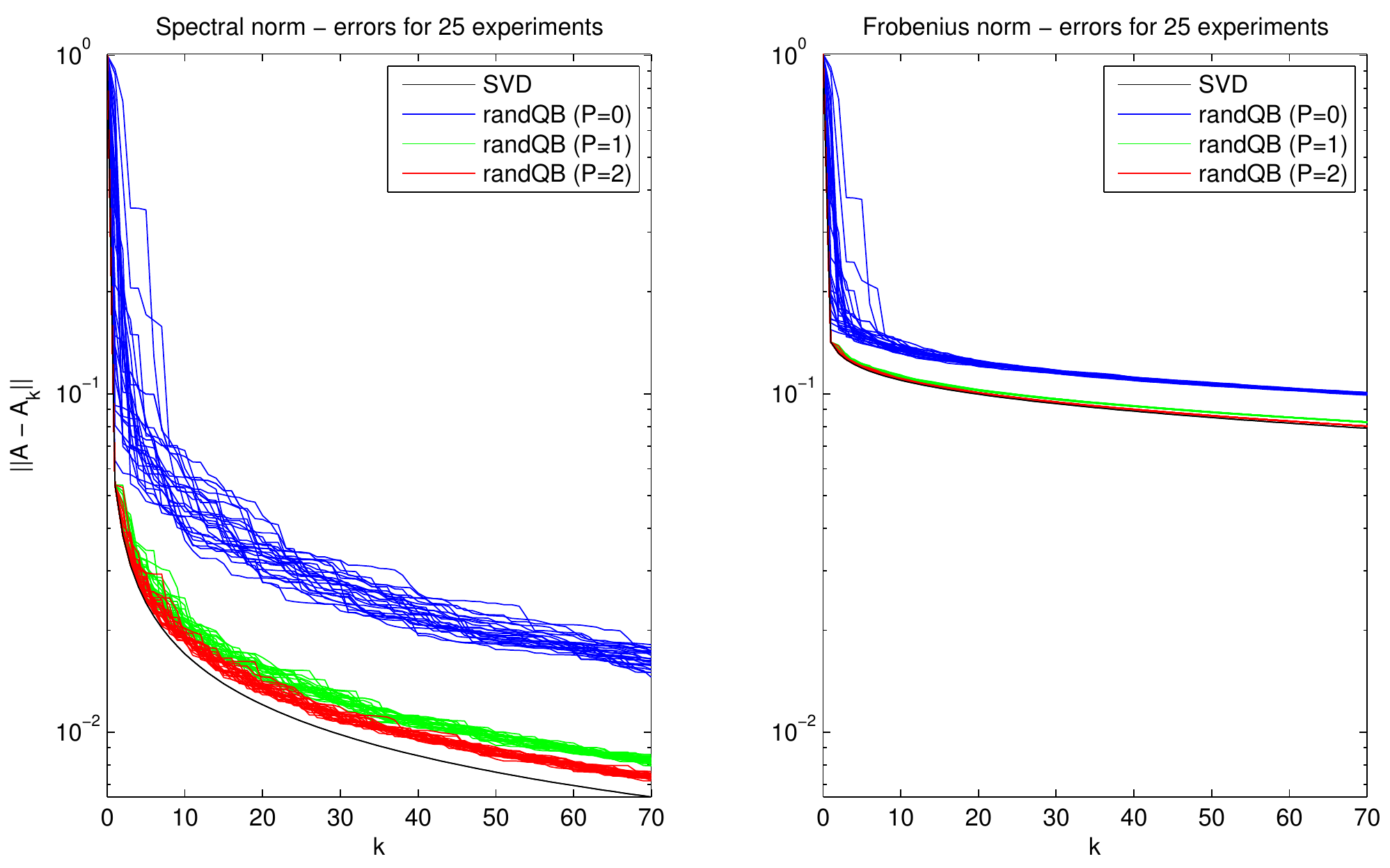}
\caption{The error paths for 25 instantiations of the randomized
factorization algorithm applied to Matrix 2.}
\label{fig:many_err_matrix2}
\end{figure}

\begin{figure}
\includegraphics[width=\textwidth]{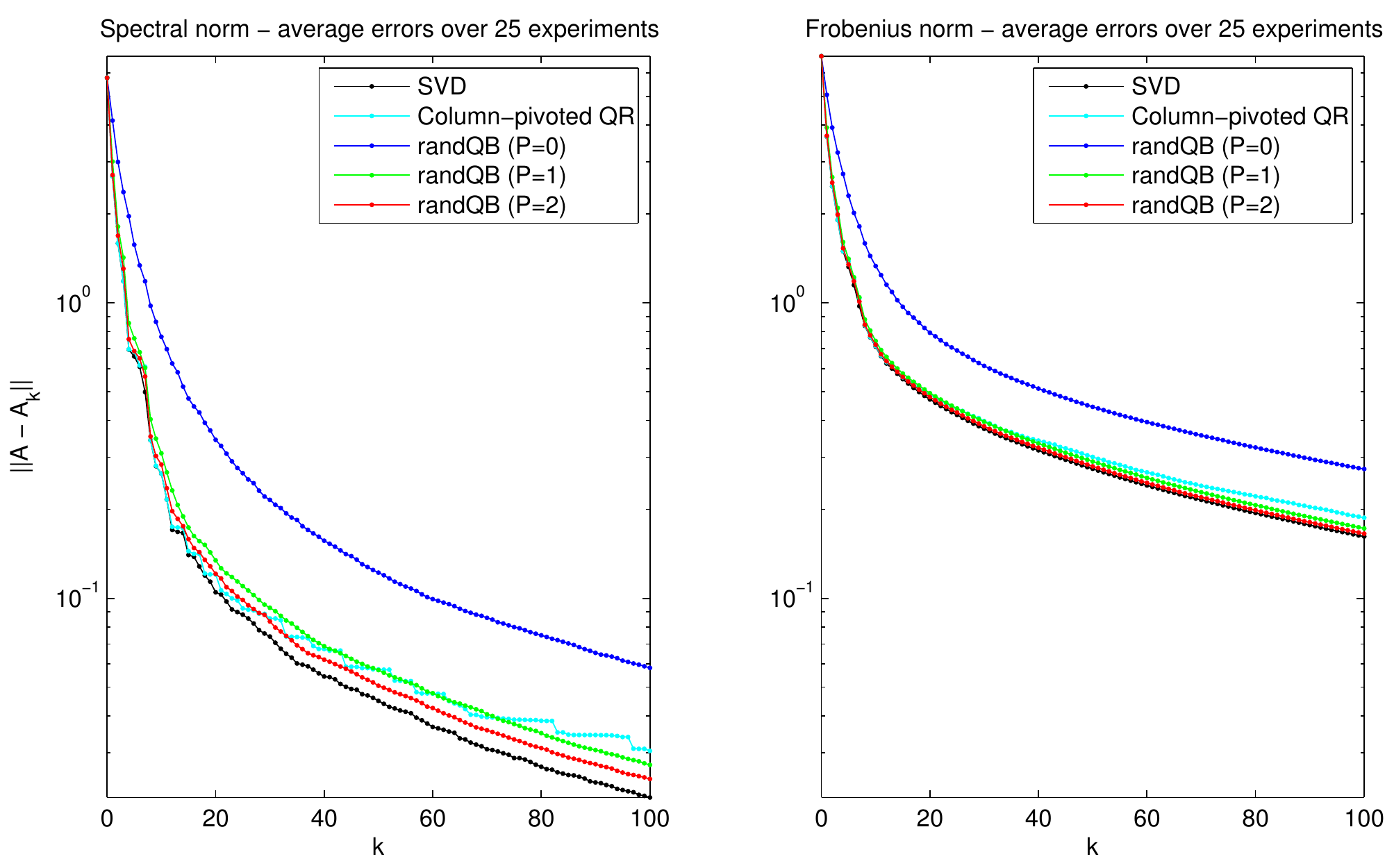}
\caption{The empirical mean errors from 25 instantiations of the randomized
factorization algorithm applied to Matrix 3.}
\label{fig:mean_err_matrix3}
\end{figure}

\begin{figure}
\includegraphics[width=\textwidth]{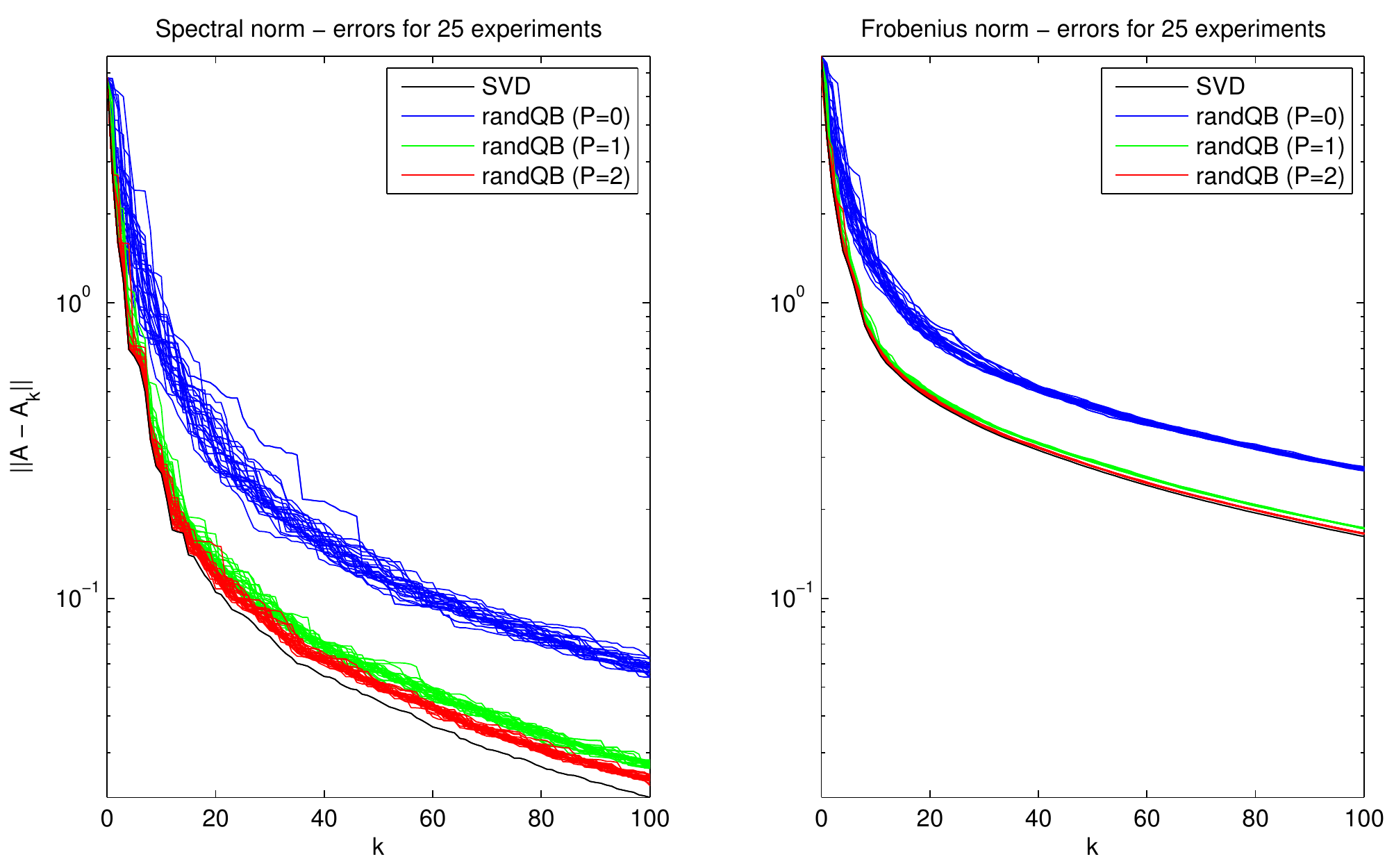}
\caption{The error paths for 25 instantiations of the randomized
factorization algorithm applied to Matrix 3.}
\label{fig:many_err_matrix3}
\end{figure}

\begin{figure}
\includegraphics[width=\textwidth]{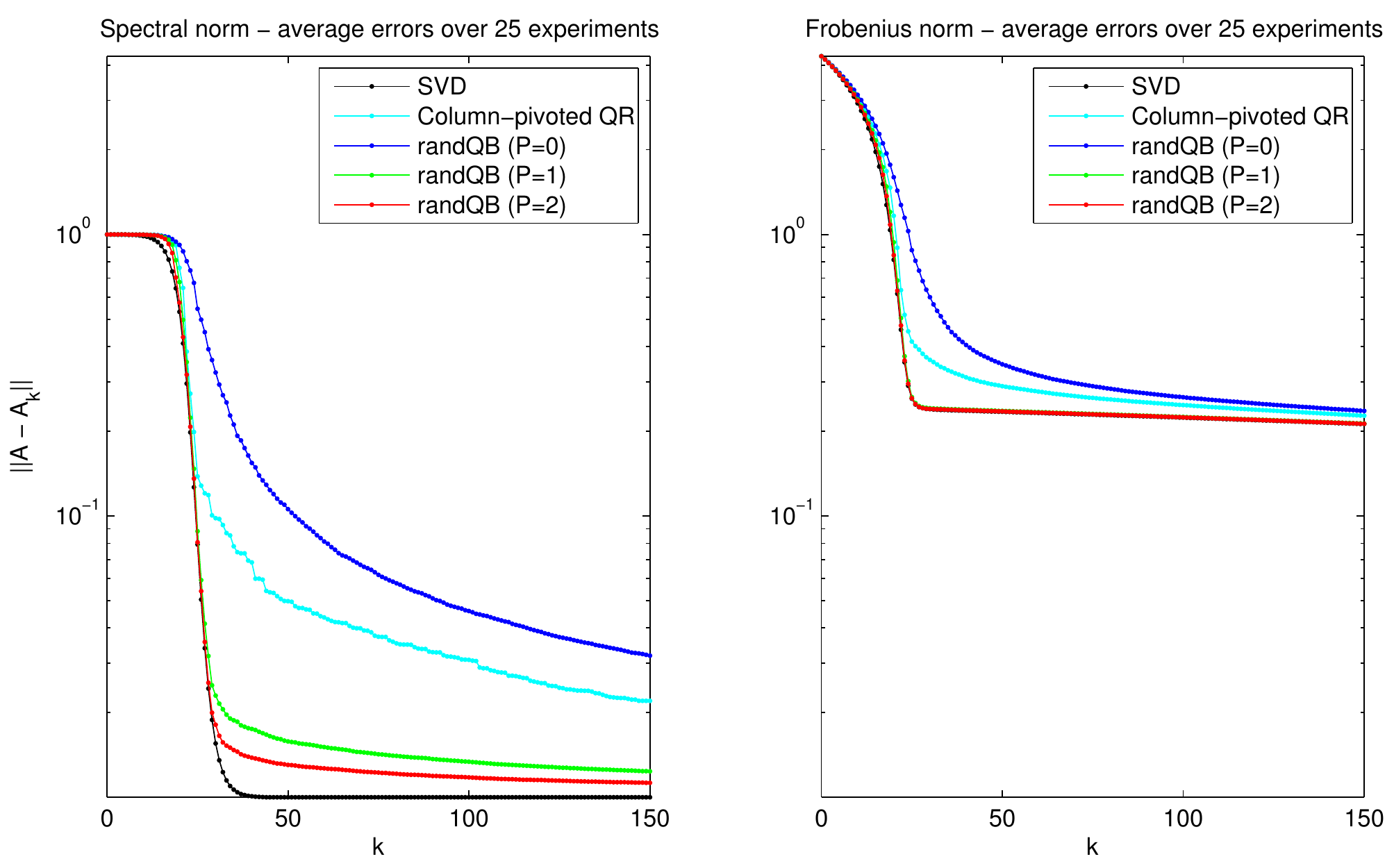}
\caption{The empirical mean errors from 25 instantiations of the randomized
factorization algorithm applied to Matrix 5.}
\label{fig:mean_err_matrix5}
\end{figure}

\begin{figure}
\includegraphics[width=\textwidth]{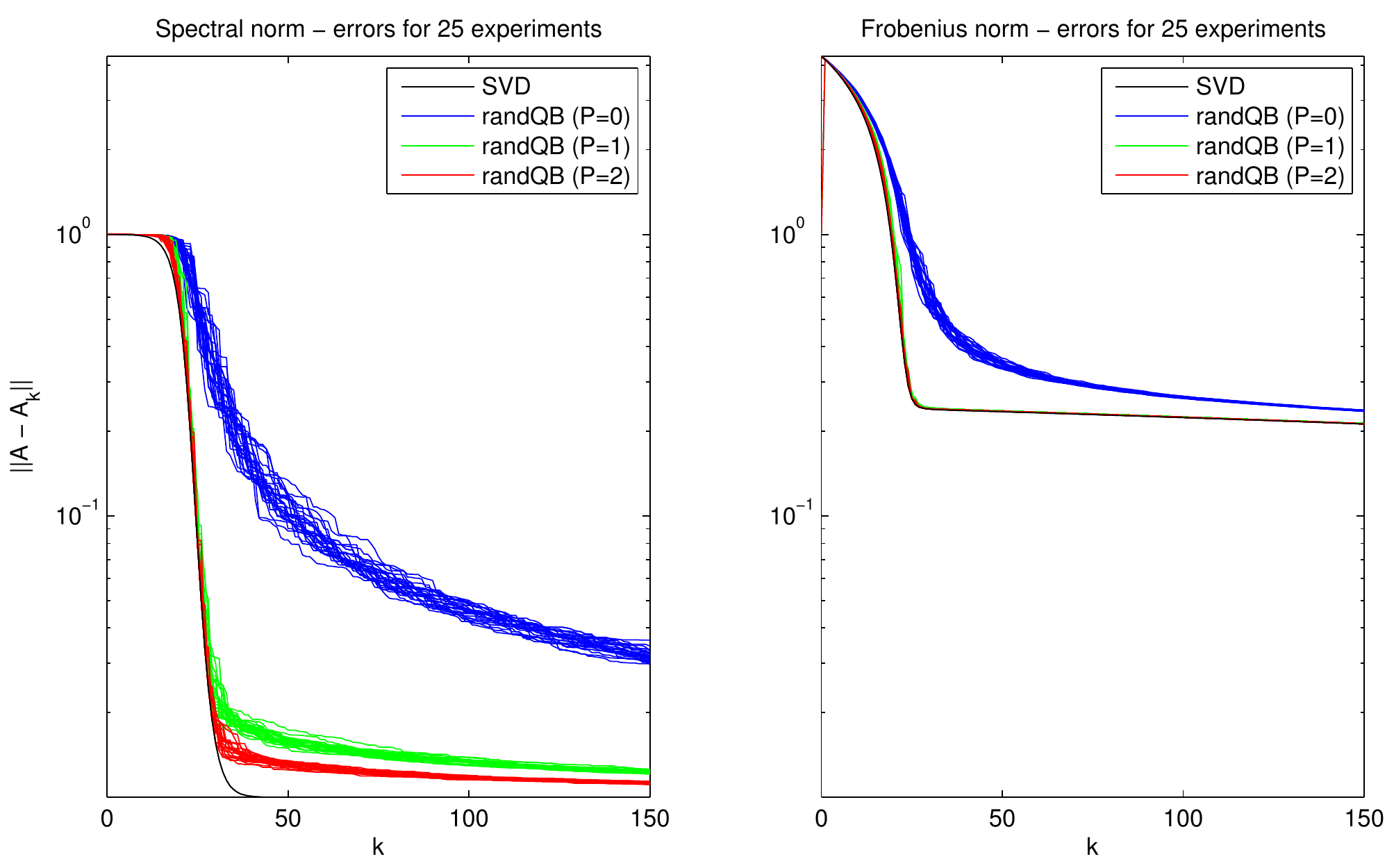}
\caption{The error paths for 25 instantiations of the randomized
factorization algorithm applied to Matrix 5.}
\label{fig:many_err_matrix5}
\end{figure}

\begin{figure}
\includegraphics[width=\textwidth]{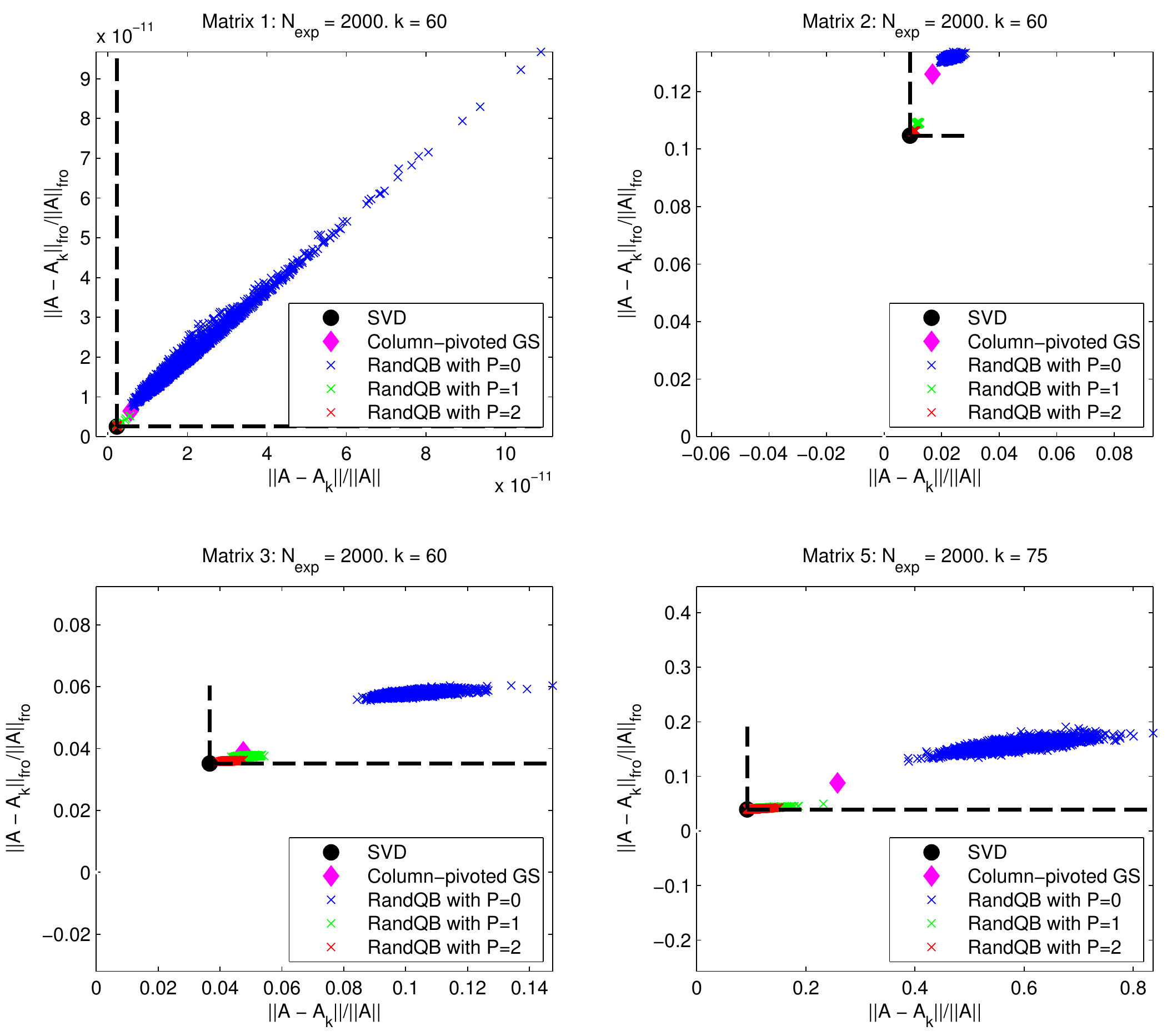}
\caption{Each blue cross in the graphs represents one instantiation
of the randomized blocked algorithm. The $x$- and $y$-coordinates
show the relative errors in the spectral and Frobenius norms, respectively.
For reference, we also include the error from classical column-pivoted
Gram-Schmidt (the magenta diamond), and the error incurred by the truncated
SVD. The dashed lines are the horizonal and vertical lines cutting through
the point representing the SVD --- since these errors are minimal, every
other dot must be located above and to the right of these lines.}
\label{fig:scatter_plots}
\end{figure}

\end{appendix}

\end{document}